\documentclass{elsarticle}

\usepackage[utf8]{inputenc}
\usepackage{amsmath}
\usepackage{amssymb}
\usepackage{amsfonts}
\usepackage{amsthm}
\usepackage{geometry}
\usepackage{hyperref}
\usepackage{natbib}
\usepackage{subcaption}
\usepackage{booktabs}
\usepackage{color}
\usepackage{graphicx}
\usepackage{multirow}
\usepackage{booktabs}

\newtheorem{theorem}{Theorem}
\newtheorem{lemma}{Lemma}
\newtheorem{remark}{Remark}
\newtheorem{definition}{Definition}

\begin{document}

\begin{frontmatter}
    \title{Sharp inf-sup estimate for the Stokes equation in tight domains with periodic pillars and some numerical implications}
    \author[inst1,inst2]{Qi Xin}
    \ead{qixin1@link.cuhk.edu.cn}

    \author[inst1,inst2,inst3]{Shihua Gong\corref{cor1}}
    \ead{gongshihua@cuhk.edu.cn}

    \author[inst4]{Jinchao Xu}
    \ead{jinchao.xu@kaust.edu.sa}

    \cortext[cor1]{Corresponding author}

    \affiliation[inst1]{organization={School of Science and Engineering, The Chinese University of Hong Kong},
        addressline={Shenzhen},
        postcode={518172},
        state={Guangdong},
        country={China}}

    \affiliation[inst2]{organization={Shenzhen International Center for Industrial and Applied Mathematics, Shenzhen Research Institute of Big Data},
        addressline={Shenzhen},
        postcode={518000},
        state={Guangdong},
        country={China}}

        \affiliation[inst3]{organization={Shenzhen Loop Area  Institute },
        addressline={Shenzhen},
        postcode={518000},
        state={Guangdong},
        country={China}}


    \affiliation[inst4]{organization={Applied Mathematics and Computational Sciences, CEMSE Division, King Abdullah University of Science and Technology},
        city={Thuwal},
        postcode={23955},
        country={Saudi Arabia}}

    \begin{keyword}
        Stokes equation, inf-sup estimate, finite element methods, preconditioning  
    \end{keyword}

    \begin{abstract}
        The predictive simulation of fluid dynamics in densely packed microfluidic devices, such as Deterministic Lateral Displacement (DLD) arrays, stagnates with standard iterative solvers. We show that this failure is not algorithmic but rooted in the pre-asymptotic degradation of the pressure-velocity coupling stability. For periodic pillar geometries in a generalized lattice framework, we prove that the continuous Ladyzhenskaya-Babuška-Brezzi (LBB) condition, also called the inf-sup constant, deteriorates exactly as $m^{-1}$ up to a positive multiplicative constant, where $m$ is the pillar density (the number of pillars per unit length). This induces a priori error amplification proportional to $m$ and a pressure Schur complement condition number scaling as $\mathcal{O}(m^2)$. To overcome this theoretical limit, we propose a parameter-free, adaptively scaled Augmented Lagrangian (AL) stabilization strategy with penalty $\gamma \propto m^2$. Numerical experiments on both standard square and asymmetric DLD arrays validate the theoretical bounds: the AL method reduces outer FGMRES iterations from 437 to 22 on a 1.85M-DoF square array and from 687 to 24 on a 1.77M-DoF DLD array.
    \end{abstract}
\end{frontmatter}


\section{Introduction}\label{sec:intro}

Microfluidic devices featuring dense periodic pillar arrays have advanced biomedical engineering, chemical processing, and precise particle manipulation. A prominent exemplar is the Deterministic Lateral Displacement (DLD) device \cite{huang2004continuous, McGrath2014, Inglis2006, Dincau2018}, widely employed in high-throughput cell sorting and biophysical fractionations. These devices typically consist of thousands to millions of microscopic solid pillars arranged in a shifted Bravais lattice to control fluid streamlines and particle trajectories.

While the fabrication of such devices has matured, the predictive computational modeling of the fluid dynamics and fluid-structure interactions (FSI) within these dense arrays remains open. As the pillar density increases ( quantified by the number $m\to \infty$ of pillars in a unit length , or the characteristic cell size $\epsilon \to 0$), standard numerical solvers for the incompressible Stokes equations stagnate \cite{Meier2022}.

The performance of standard iterative frameworks and classical block-preconditioning techniques (e.g., standard Schur complement approximations) degrades in dense pillar arrays. The root cause lies in the mathematical structure of the saddle-point problem. The convergence rate of preconditioned iterative solvers is dictated by the stability of the pressure-velocity coupling, quantified by the continuous and discrete inf-sup (LBB) constants \cite{boffi2013mixed, Mardal2010, loghin2004analysis}. The inf-sup constant depends on the underlying domain geometry, deteriorating in elongated, anisotropic, or heavily perforated configurations \cite{Chizhonkov2000, Dobrowolski2003, sande2025robust}. As the pillar packing density increases, the stability of the system collapses, rendering standard parameter tuning ineffective.

The magnitude of the inf-sup constant $\beta$, see definition in \eqref{eq:infsup_def}, dictates both the continuous physical stability and the numerical resolvability of the fluid system:
\begin{itemize}
    \item {A Priori Error Amplification:} At the level of numerical discretization, Brezzi's theorem \cite{Brezzi1974, boffi2013mixed, Girault1986, Ern2004} establishes that the a priori error estimate for any stable mixed finite element method is amplified by a factor proportional to $\beta_h^{-1}$, where $\beta_h$ is the discrete inf-sup constant. If the continuous constant $\beta$ approaches zero, the discrete inf-sup constant $\beta_h$ inherits this geometric degradation \cite{boffi2013mixed}, guaranteeing a loss of accuracy. As the geometric penalty drives $\beta_h$ toward zero, this pre-factor amplifies the discretization error. The optimal asymptotic rates are overwhelmed unless prohibitively fine meshes offset this algebraic amplification.
    \item {Stagnation of Iterative Solvers:} At the linear algebra level, the condition number of the pressure Schur complement operator $S$ is proportional to $\beta_h^{-2}$. This ill-conditioning dictates the stagnation of classical segregated iterative methods (such as Uzawa or block preconditioner algorithms) \cite{Mardal2010,Bacuta2006} and neutralizes the efficiency of standard block-diagonal preconditioners in Krylov subspace methods (e.g., GMRES).
\end{itemize}

While the degradation of the continuous inf-sup constant in slender domains is documented in the literature \cite{Chizhonkov2000,Dobrowolski2003, sande2025robust}, its precise pre-asymptotic estimate within the multiply-connected topologies of dense pillar arrays lacks explicit quantitative characterization. Classical homogenization theory \cite{Allaire1991,Lu2020} studies the asymptotic limit as the characteristic cell size $\epsilon \to 0$ (or equivalently, the pillar density $m \to \infty$), smearing out the discrete solids to yield a continuous macroscopic Darcy's law. However, practical CFD simulations of DLD devices operate in the pre-asymptotic regime: the pillars are discrete and finite, and the full Stokes equations must be solved explicitly to resolve local shear stresses for precise particle tracking.

We establish the sharp pre-asymptotic degradation rate of the continuous LBB constant $\beta$ in these generalized periodic pillar geometries. By constructing global linear pressure fields and leveraging local restriction operators, we prove that the inf-sup constant degrades exactly as $m^{-1}$ up to a positive multiplicative constant. This characterization shows that the failure of classical decoupled solvers in densely packed micro-channels is an intrinsic property of the PDE, not an implementation artifact, and exposes the a priori error amplification that standard finite element discretizations inherit.

Our sharp estimate also provides a quantitative foundation for stabilization. Advanced monolithic architectures—such as Augmented Lagrangian (AL) methods—bypass this geometric penalty entirely \cite{Benzi2006, Farrell2019, Fortin1983, Glowinski1989} when the penalty parameter is scaled proportional to $m^2$, as dictated by our pre-asymptotic bounds.

Our contributions are:
\begin{enumerate}
    \item \textbf{Sharp bound.} We prove $\beta(\Omega_m) = \Theta(m^{-1})$ for the continuous LBB constant on generalized periodic pillar arrays (Theorem~\ref{thm:main_scaling}), the first explicit pre-asymptotic characterization for multiply-connected microfluidic geometries.
    \item \textbf{Discretization consequence.} We map the sharp bound to its a priori error implication: the standard Brezzi estimate is amplified by $\beta_h^{-1} = \mathcal{O}(m)$, and we numerically expose an asymmetric pollution under macroscopic boundary conditions. As the mesh size $h$ is refined such that $mh\sim \mathcal{O}(1) $, under Dirichlet inflow profiles the relative pressure error stagnates at $\mathcal{O}(1)$, whereas under Neumann pressure-drop conditions first-order convergence is retained.
    \item \textbf{Algorithmic consequence.} We show that the pressure Schur complement satisfies $\kappa(M_p^{-1}S) \ge \mathcal{O}(m^2)$ and derive that setting the Augmented Lagrangian penalty as $\gamma \propto m^2$ restores $\mathcal{O}(1)$ outer iteration counts. The resulting parameter-free scaling reduces outer FGMRES iterations from 437 to 22 on a 1.85M-DoF square array and from 687 to 24 on a 1.77M-DoF DLD array.
\end{enumerate}

Section~\ref{sec:problem_main_results} states the geometric setting and the main theorem. Section~\ref{sec:proofs} establishes three lemmas. Section~\ref{sec:proofs_main} proves the main theorem. Section~\ref{sec:numerical} validates the $\Theta(m^{-1})$ rate, exposes its impact on discretization accuracy, and demonstrates density-independent AL convergence. Section~\ref{sec:conclusion} concludes.


\section{Mathematical Model and Main Results}\label{sec:problem_main_results}

\subsection{Geometric Configuration of tight domains with Periodic Pillars}\label{sec:problem_main_results1}

Let the global bounding fluid domain, representing the macroscopic scale of the system, be $\Omega \subset \mathbb{R}^d$ ($d \in \{2, 3\}$), defined by a rectangular base cross-section $\Omega_{xy} = (0, L_x) \times (0, L_y)$. For $d=2$, the domain is strictly planar ($\Omega = \Omega_{xy}$), while for $d=3$, $\Omega = \Omega_{xy} \times (0, L_z)$ represents a microfluidic channel of height $L_z$. We assume that the domain is dimensionless, which means the lengths of $L_x$ and $L_y$ are of magnitude $\mathcal{O}(1)$.

To model the microscopic geometry of generic microfluidic pillar arrays, including DLD devices, we introduce a characteristic pore-scale periodicity parameter $\epsilon > 0$ and define a dimensionless reference lattice in the $x$-$y$ plane spanned by basis vectors $\hat{\mathbf{e}}_1 = (1, \delta)$ and $\hat{\mathbf{e}}_2 = (0, 1)$, where $\delta \in [0, 1)$ is the row shift fraction.

The physical lattice nodes are distributed in the cross-sectional plane according to:
\begin{equation}
    \mathbf{x}_{i,j} = \epsilon (i \hat{\mathbf{e}}_1 + j \hat{\mathbf{e}}_2), \quad (i,j) \in \mathbb{Z}^2.
\end{equation}

Let $P_0 \subset \mathbb{R}^2$ be the open, dimensionless reference cell in the cross-sectional plane centered at the origin, spanned by $\hat{\mathbf{e}}_1$ and $\hat{\mathbf{e}}_2$. We define a dimensionless reference pillar cross-section $T \subset \mathbb{R}^2$ as an arbitrary connected, closed, and bounded set with a Lipschitz boundary. To ensure that the pore-scale pillar cross-section is non-degenerate and guarantees a uniform local fluid neighborhood for subsequent theoretical restrictions, we assume there exist two concentric open balls $B(0, r_0)$ and $B(0, R_0)$ with geometric constants $0 < r_0 \le R_0$ such that:
\begin{equation}
    B(0, r_0) \subset T \subset B(0, R_0) \subset P_0.
\end{equation}

Let $a_{\epsilon} > 0$ be a dimensional scaling parameter representing the characteristic size of the physical pillars. To prevent the pillars from overlapping the cell boundaries and to strictly bound the maximum solid volume fraction, we assume there exists a positive geometric constant $\kappa \in (0, 1)$ such that the relative pillar size uniformly satisfies $\sup_\epsilon (a_{\epsilon}/\epsilon) \le \kappa$. The individual solid pillar at cell $(i,j)$ is generated by scaling the reference pillar cross-section and translating it to the lattice node. Depending on the dimension $d$, it is either a 2D inclusion in the plane or a cylinder of cross-section $a_\epsilon T$ spanning the full channel height:
\begin{equation}
    T_{\epsilon}^{i,j} =
    \begin{cases}
        \mathbf{x}_{i,j} + a_{\epsilon} T                   & \text{for } d = 2, \\
        (\mathbf{x}_{i,j} + a_{\epsilon} T) \times [0, L_z] & \text{for } d = 3.
    \end{cases}
\end{equation}

To rigorously avoid the truncation of pillars at the global lateral boundaries $\partial \Omega_{xy}$, we restrict the placement of solid pillars to the interior of the channel. We define the active index set $\mathcal{I}_{\epsilon}$ consisting of indices for which the corresponding projected physical cell $P_\epsilon^{i,j} = \mathbf{x}_{i,j} + \epsilon P_0$ is strictly contained within $\Omega_{xy}$:
\begin{equation}
    \mathcal{I}_{\epsilon} := \{ (i,j) \in \mathbb{Z}^2 : \overline{P}_{i,j} \subset \Omega_{xy} \}.
\end{equation}

Finally, the effective fluid domain, denoted by $\Omega_m$ (with $m \propto \epsilon^{-1}$ representing the pillar density in a unit length), is defined as the global bounding channel excluding the union of all fully contained solid pillars:
\begin{equation}
    \Omega_m := \Omega \setminus \bigcup_{(i,j) \in \mathcal{I}_{\epsilon}} T_{\epsilon}^{i,j}.
\end{equation}
This geometric arrangement is illustrated schematically in Figure \ref{fig:dld_geometry}, which shows the periodic parallelogram cells and the relative positions of the pillars within the array.

\begin{figure}[htbp]
    \centering
    \includegraphics[width=0.5\textwidth]{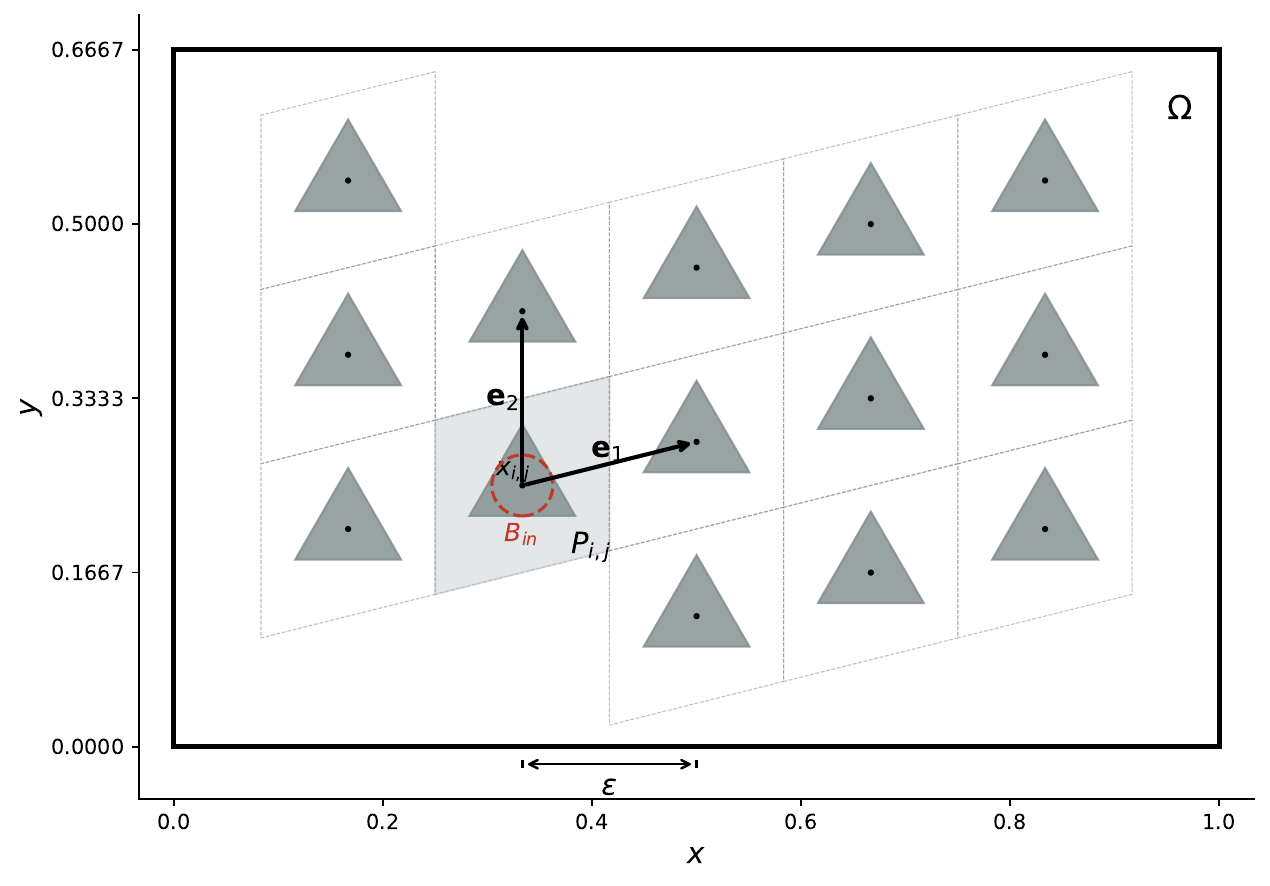}
    \caption{Geometric configuration of the periodic pillar array.}
    \label{fig:dld_geometry}
\end{figure}

\subsection{The Stokes Problem and the inf-sup Condition}

We consider the steady, incompressible Stokes equations modeling the creeping flow of a viscous fluid through the effective fluid domain $\Omega_m$:
\begin{equation} \label{eq:stokes_strong}
    \begin{cases}
        -\mu \Delta \mathbf{u} + \nabla p = f & \text{in } \Omega_m,          \\
        \nabla \cdot \mathbf{u} = 0           & \text{in } \Omega_m,          \\
        \mathbf{u} = 0                        & \text{on } \partial \Omega_m,
    \end{cases}
\end{equation}
where $\mathbf{u}$ is the fluid velocity, $p$ is the pressure, $\mu > 0$ is the dynamic viscosity, and $f$ represents the body force. The boundary $\partial \Omega_m = \partial \Omega \cup (\cup_{i,j} \partial T_{\epsilon}^{i,j})$ incorporates the no-slip condition on both the exterior channel walls and the surfaces of all internal pillars.

To cast \eqref{eq:stokes_strong} into a variational form, we introduce the standard functional spaces. The velocity space is $V(\Omega_m) = H_0^1(\Omega_m)^d$, equipped with the norm $\|\mathbf{u}\|_V = \|\nabla \mathbf{u}\|_{L^2(\Omega_m)}$. The pressure space is $Q(\Omega_m) = L_0^2(\Omega_m) = \{ q \in L^2(\Omega_m) : \int_{\Omega_m} q \, \mathrm{d}x = 0 \}$, equipped with the standard $L^2$-norm.

The well-posedness of the Stokes saddle-point problem, and the robust coupling between the velocity and pressure spaces, are governed by the classical Ladyzhenskaya-Babuška-Brezzi (LBB) condition. The continuous LBB constant for the perforated pillar-array domain $\Omega_m$ is defined as:
\begin{equation} \label{eq:infsup_def}
    \beta(\Omega_m) := \inf_{q \in Q(\Omega_m) \setminus \{0\}} \sup_{\mathbf{v} \in V(\Omega_m) \setminus \{0\}} \frac{\int_{\Omega_m} q \nabla \cdot \mathbf{v} \, \mathrm{d}x}{\|q\|_{L^2(\Omega_m)} \|\nabla \mathbf{v}\|_{L^2(\Omega_m)}}.
\end{equation}

\subsection{Main Results}

The primary objective of this paper is to establish the sharp pre-asymptotic behavior of the continuous LBB constant $\beta(\Omega_m)$ as the pillar density increases (i.e., $\epsilon \to 0$ or equivalently $m \to \infty$). We characterize this severe degradation through the following theorem, the proofs of which are deferred to Section \ref{sec:proofs_main}. Here and throughout the paper, the notation $\Theta(\cdot)$ denotes the two-sided asymptotic bound: for positive functions $f(x), g(x)$, we write $f(x) = \Theta(g(x))$ if there exist constants $c, C > 0$ and $x_0$ such that $c \, g(x) \le f(x) \le C \, g(x)$ for all $x \ge x_0$ (or as $x \to 0$), i.e., $f$ grows or decays exactly at the rate of $g$ up to positive multiplicative constants.

\begin{theorem}[Sharp Pre-asymptotic Estimate of the Continuous LBB Constant]\label{thm:main_scaling}
    Under the geometric assumptions defined in Section \ref{sec:problem_main_results1}, assume the obstacle size scales proportionally with the periodicity (i.e., $a_{\epsilon} \propto \epsilon$). Then, the continuous inf-sup constant $\beta(\Omega_m)$ satisfies the following estimate:
    \begin{equation}
        \beta(\Omega_m) = \Theta(\epsilon) = \Theta(m^{-1}).
    \end{equation}
\end{theorem}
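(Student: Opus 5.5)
We prove the two inequalities $\beta(\Omega_m) \le C\epsilon$ and $\beta(\Omega_m) \ge c\,\epsilon$ separately. The constants $c, C$ may depend only on $T$, $r_0$, $R_0$, $\kappa$, $\delta$, the ratio $a_\epsilon/\epsilon$ and the macroscopic dimensions; they must not depend on $\epsilon$.

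\textbf{Upper bound.} We use a global linear pressure as the test function. Set $q(\mathbf{x}) = x_1 - \bar{x}_1$, where $\bar{x}_1$ is the mean of $x_1$ over $\Omega_m$, so that $q \in Q(\Omega_m)$. Since $a_\epsilon \le \kappa\epsilon$ with $\kappa<1$, the pillars occupy a fixed fraction $<1$ of each cell, and $\|q\|_{L^2(\Omega_m)} \ge c_0 > 0$ uniformly in $\epsilon$. For any $\mathbf{v} \in V(\Omega_m)$, integrating by parts gives $\int_{\Omega_m} q\,\nabla\cdot\mathbf{v} = -\int_{\Omega_m} v_1$, because $\mathbf{v}$ vanishes on $\partial\Omega_m$. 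Extend $\mathbf{v}$ by zero into the pillars. The key ingredient is a perforated Poincaré inequality
\[
\|\mathbf{v}\|_{L^2(\Omega_m)} \le C\epsilon \|\nabla \mathbf{v}\|_{L^2(\Omega_m)}.
\]
To prove it, apply on each cell $P_\epsilon^{i,j}$ a scaled Friedrichs inequality for functions vanishing on $\mathbf{x}_{i,j}+a_\epsilon T \supset B(\mathbf{x}_{i,j}, a_\epsilon r_0)$. Since $a_\epsilon \propto \epsilon$, scaling to the reference cell gives a constant of order $\epsilon$. In 3D the same argument is applied slice-wise in $z$, since the pillars are full-height cylinders. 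The strip near $\partial\Omega_{xy}$ not covered by active cells has width $O(\epsilon)$, and there the standard 1D Poincaré inequality from the wall $\partial\Omega$ gives the same bound. Then
\[
\Big|\int v_1\Big| \le |\Omega|^{1/2}\|\mathbf{v}\|_{L^2} \le C\epsilon\|\nabla\mathbf{v}\|_{L^2},
\]
so $\beta \le C\epsilon/c_0$.

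\textbf{Lower bound.} Here we construct a restriction operator. Given $q \in Q(\Omega_m)$, the inf-sup property on the unperforated Lipschitz domain $\Omega$ (Bogovskii) applies to the zero extension $\tilde q$ of $q$. Its mean is zero, so it gives $\mathbf{w} \in H_0^1(\Omega)^d$ with $\nabla\cdot\mathbf{w} = \tilde q$ and $\|\nabla\mathbf{w}\| \le C_\Omega\|q\|$. We then modify $\mathbf{w}$ cell by cell, Tartar/Allaire style, into $R_\epsilon\mathbf{w} \in V(\Omega_m)$ satisfying:
\begin{itemize}
\item[(i)] $R_\epsilon\mathbf{w} = \mathbf{w}$ away from the pillars;
\item[(ii)] $R_\epsilon\mathbf{w}=0$ on the pillars;
\item[(iii)] $\int_{\Omega_m} q\,\nabla\cdot R_\epsilon\mathbf{w} = \int_{\Omega_m} q\,\nabla\cdot\mathbf{w}$;
\item[(iv)] $\|\nabla R_\epsilon\mathbf{w}\| \le C(\|\nabla\mathbf{w}\| + \epsilon^{-1}\|\mathbf{w}\|) \le C\epsilon^{-1}\|\nabla \mathbf{w}\|$.
\end{itemize}
The construction works on the reference annulus $B(0,R_0')\setminus T$, with $R_0 < R_0'$ chosen so the ball stays inside $P_0$; the margin $B(0,R_0)\subset P_0$ is what guarantees room for this annulus. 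On it we solve a local Stokes/Bogovskii correction that removes the trace of $\mathbf{w}$ on $\partial T$ while preserving the divergence. After scaling by $\epsilon$, the local inf-sup constant is $O(1)$, and the factor $\epsilon^{-1}$ comes only from the gradient of the cutoff. This gives
\[
\sup_{\mathbf{v}} \frac{\int q\,\nabla\cdot\mathbf{v}}{\|\nabla\mathbf{v}\|} \ge \frac{\|q\|^2}{C\epsilon^{-1}\|q\|} = c\,\epsilon\|q\|.
\]

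\textbf{Main obstacle.} The hardest step is (iii) together with (iv). To preserve $\int q\,\nabla\cdot(\cdot)$ exactly, the local correction in each cell must satisfy $\nabla\cdot(R_\epsilon\mathbf{w}) = \nabla\cdot\mathbf{w}$ on the annulus. This requires the compatibility condition $\int_{\partial T}\mathbf{w}\cdot\mathbf{n} = 0$, which holds because $\nabla\cdot \mathbf{w} = 0$ inside the pillar, since $\tilde q=0$ there. Even so, the Bogovskii constant on the annulus $B(0,R_0')\setminus T$ must be uniform, and this is guaranteed by the Lipschitz assumption on $T$.

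An alternative that avoids exact divergence preservation is to bound $\beta \ge c\,\epsilon$ by the Fortin-type criterion directly. Such a criterion requires only that $R_\epsilon$ preserve $\int_{\Omega_m} q\,\nabla\cdot(\cdot)$ for this particular $q$, which is exactly what the compatibility condition delivers.
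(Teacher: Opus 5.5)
Your proposal is correct and follows essentially the same route as the paper: the linear test pressure $x_1-\bar x_1$ with a perforated Poincaré inequality gives the upper bound, and zero extension, a Bogovskii field on the unperforated $\Omega$, and a cell-wise divergence-preserving restriction operator (whose compatibility comes from $\nabla\cdot\mathbf{w}=\tilde q=0$ inside the pillars) give the lower bound. The differences are only technical: you work directly at the scale $a_\epsilon\propto\epsilon$, using a scaled Friedrichs inequality and a cutoff-plus-Bogovskii correction on a fixed reference annulus, whereas the paper carries the general parameter $\sigma_\epsilon$ through a polar-coordinate capacity estimate and a local Stokes restriction estimated via Allaire's lemma.
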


\section{Auxiliary Lemmas for the Stability Analysis}\label{sec:proofs}

Before proving the main theorem, we establish three useful lemmas. From this section, we refer the term pillar as obstacle. Lemma \ref{lemma:poincare} provides a global Poincaré inequality that explicitly captures the geometric penalty $\sigma_{\epsilon}$ induced by the obstacle array. Lemma \ref{lemma:positivity} ensures the existence of a test pressure field with a uniformly bounded gradient-to-norm ratio, independent of the obstacle density. Finally, Lemma \ref{lemma:restriction_operator} constructs a restriction operator that enforces the microscopic no-slip conditions on global velocity fields while preserving their macroscopic divergence, providing the crucial stability estimates needed for the inf-sup lower bound. We give proof for 2-dimensional case, and the 3-dimensional case can be proved similarly.

\begin{definition}[Asymptotic Parameter for Obstacle Arrays]\label{def:asymptotic_parameter}
    For the geometric setting in Section \ref{sec:problem_main_results}, the asymptotic behavior of the fluid flow is governed by the critical geometric parameter $\sigma_{\epsilon}$, defined as:
    \begin{equation}
        \sigma_{\epsilon} := \epsilon \left| \log \frac{a_{\epsilon}}{\epsilon} \right|^{\frac{1}{2}}
    \end{equation}
\end{definition}

The following lemma generalizes the classical results from homogenization theory \cite{Allaire1991,Lu2020} to our specific rectangular Bravais lattice configuration.

\begin{lemma}[Global Poincaré Inequality]\label{lemma:poincare}
    Let $\Omega_m$ be the fluid domain with dense obstacles defined in Section \ref{sec:problem_main_results}, with zero boundary conditions on the holes $\partial T_{\epsilon}^{i,j}$ and the exterior boundary $\partial \Omega$. There exists a constant $C > 0$, independent of $\epsilon$ (and thus independent of the obstacle density), such that for all $\mathbf{v} \in H_0^1(\Omega_m)^d$:
    \begin{equation}
        \|\mathbf{v}\|_{L^2(\Omega_m)} \le C \min\{1, \sigma_{\epsilon}\} \|\nabla \mathbf{v}\|_{L^2(\Omega_m)}.
    \end{equation}
\end{lemma}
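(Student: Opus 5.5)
The bound splits into two pieces, $\|\mathbf{v}\|_{L^2}\le C\|\nabla\mathbf{v}\|_{L^2}$ and $\|\mathbf{v}\|_{L^2}\le C\sigma_\epsilon\|\nabla\mathbf{v}\|_{L^2}$, and I would prove them separately. The first is the classical Poincaré inequality on the bounding box. Extending $\mathbf{v}\in H_0^1(\Omega_m)^d$ by zero inside the pillars gives a function in $H_0^1(\Omega)^d$ with the same $L^2$ and gradient norms. Then $\|\mathbf{v}\|\le C_P(\Omega)\|\nabla\mathbf{v}\|$, with $C_P$ depending only on $L_x,L_y$ (and $L_z$), hence independent of $\epsilon$.

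For the $\sigma_\epsilon$ bound I would argue cell by cell and sum. Rescale a cell $P_\epsilon^{i,j}$ by $\epsilon$ to the reference cell $P_0$, which contains the hole $(a_\epsilon/\epsilon)T$. Since $B(0,r_0)\subset T$, the zero-extended function vanishes on the ball $B(0,\rho)$ with $\rho=r_0 a_\epsilon/\epsilon\le \kappa R_0$. The core estimate is then a local Poincaré inequality on $P_0$ for $H^1$ functions vanishing on $B(0,\rho)$:
\begin{equation*}
\|w\|_{L^2(P_0)}^2\le C\,(1+|\log\rho|)\,\|\nabla w\|_{L^2(P_0)}^2 .
\end{equation*}
I would prove this in polar coordinates around the centre. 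For $\rho\le r\le R$, with $B(0,R)\subset P_0$, write $w(r,\theta)=\int_\rho^r\partial_s w\,ds$ and apply Cauchy--Schwarz with weight $s$. This gives $|w(r,\theta)|^2\le \log(r/\rho)\int_\rho^r|\partial_s w|^2 s\,ds$, and integrating over $r\,dr\,d\theta$ yields the $|\log\rho|$ factor on the annulus. The remainder $P_0\setminus B(0,R)$ is a fixed Lipschitz region, so it is handled by a standard trace/Poincaré argument that bounds $w$ there by its values on the circle of radius $R$ plus the gradient. Scaling back to the physical cell multiplies the constant by $\epsilon^2$, giving
\begin{equation*}
\|\mathbf{v}\|_{L^2(P^{i,j}_\epsilon)}^2\le C\epsilon^2(1+|\log(a_\epsilon/\epsilon)|)\|\nabla\mathbf{v}\|^2_{L^2(P^{i,j}_\epsilon)} .
\end{equation*}
Because $a_\epsilon/\epsilon\le\kappa<1$, the logarithm is bounded below by $|\log\kappa|>0$, so the $1$ can be absorbed and the factor is $C\sigma_\epsilon^2$.

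The main obstacle is the strip of fluid near $\partial\Omega_{xy}$ that is not covered by cells in $\mathcal{I}_\epsilon$, since those regions contain no pillar. These regions have width $O(\epsilon)$ and touch $\partial\Omega$, where $\mathbf{v}=0$. I would cover them by $O(\epsilon)$-sized boxes adjacent to the wall and use the one-dimensional Poincaré inequality in the normal direction, which gives the constant $C\epsilon^2\le C\sigma_\epsilon^2$. The sheared lattice ($\delta\neq0$) makes this covering slightly awkward, but the parallelogram cells still tile the plane, so every point lies within $O(\epsilon)$ of either an active pillar or the wall. In 3D the same argument applies slice by slice in $z$, since the pillars are cylinders.

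Finally I would sum over the cells, which are disjoint, and over the boundary boxes, which have bounded overlap, to get $\|\mathbf{v}\|\le C\sigma_\epsilon\|\nabla\mathbf{v}\|$. Taking the minimum with the global Poincaré bound completes the proof of Lemma~\ref{lemma:poincare}.
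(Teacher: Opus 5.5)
Your proposal is correct and follows essentially the same route as the paper: zero extension with the classical Poincaré inequality for the $\epsilon$-independent bound, a radial fundamental-theorem-of-calculus and weighted Cauchy--Schwarz estimate around each pillar giving the $\epsilon^2|\log(a_\epsilon/\epsilon)|$ factor, a one-dimensional Poincaré inequality on the $\mathcal{O}(\epsilon)$ boundary layer, and summation. The only difference is minor. The paper integrates the radial estimate over a circumscribed ball of radius $C_{\mathrm{out}}\epsilon$, covering each cell directly at the cost of finite overlap, whereas you use a ball inscribed in the rescaled cell and treat the rest of $P_0$ by a Poincaré/trace step, which keeps the cells disjoint. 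You also make explicit the absorption of the additive constant via $|\log(a_\epsilon/\epsilon)|\ge|\log\kappa|>0$, which the paper uses only implicitly.
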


\begin{proof}
    We extend $\mathbf{v} \in H_0^1(\Omega_m)^d$ by zero inside the obstacles $T_{\epsilon}^{i,j}$ to obtain $\tilde{\mathbf{v}} \in H_0^1(\Omega)^d$. The classical Poincaré inequality on the macroscopic domain $\Omega$ immediately yields $\|\tilde{\mathbf{v}}\|_{L^2(\Omega)} \le C_P \|\nabla \tilde{\mathbf{v}}\|_{L^2(\Omega)}$, providing the global upper bound $C_P$ independent of $\epsilon$.

    To derive the $\sigma_\epsilon$-dependent local bound, we decompose $\Omega$ into the union of active fluid cells $P_\epsilon^{i,j}$ and a boundary layer $\Lambda_{\epsilon}$ near $\partial \Omega$ containing no complete obstacles. The width of $\Lambda_{\epsilon}$ is of order $\mathcal{O}(\epsilon)$. Since $\mathbf{v} = 0$ on $\partial \Omega$, the standard one-dimensional Poincaré inequality across the thickness of $\Lambda_{\epsilon}$ gives
    \begin{equation}
        \|\mathbf{v}\|_{L^2(\Lambda_{\epsilon})}^2 \le C_0 \epsilon^2 \|\nabla \mathbf{v}\|_{L^2(\Lambda_{\epsilon})}^2.
    \end{equation}

    For each cell $P_\epsilon^{i,j}$, let $B_{\mathrm{out}}^{i,j}$ be its circumscribed ball with radius $R_{\epsilon} = C_{\mathrm{out}} \epsilon$, and $B_{\mathrm{in}}^{i,j} \subset T_{\epsilon}^{i,j}$ be the inscribed ball with radius $r_{\epsilon} = \alpha a_{\epsilon}$ (where $\alpha, C_{\mathrm{out}} > 0$ are geometric lattice constants). Since $\mathbf{v} = 0$ on $\partial T_{\epsilon}^{i,j}$, it vanishes on $\partial B_{\mathrm{in}}^{i,j}$. Using polar coordinates $(r, \theta)$ centered at the obstacle, the fundamental theorem of calculus along radial segments yields
    \begin{equation}
        \mathbf{v}(r, \theta) = \int_{r_{\epsilon}}^r \frac{\partial \mathbf{v}}{\partial \rho}(\rho, \theta) \, \mathrm{d}\rho.
    \end{equation}
    Applying the Cauchy-Schwarz inequality, we separate the derivative and the geometric weight:
    \begin{equation}
        |\mathbf{v}(r, \theta)|^2 \le \left( \int_{r_{\epsilon}}^r \left| \frac{\partial \mathbf{v}}{\partial \rho} \right|^2 \rho \, \mathrm{d}\rho \right) \left( \int_{r_{\epsilon}}^r \frac{1}{\rho} \, \mathrm{d}\rho \right) \le \ln\left(\frac{r}{r_{\epsilon}}\right) \int_{r_{\epsilon}}^{R_{\epsilon}} |\nabla \mathbf{v}|^2 \rho \, \mathrm{d}\rho.
    \end{equation}
    Integrating this point-wise bound over the outer ball $B_{\mathrm{out}}^{i,j}$, we obtain
    \begin{align}
        \|\mathbf{v}\|_{L^2(B_{\mathrm{out}}^{i,j})}^2 = \int_0^{2\pi} \int_{r_{\epsilon}}^{R_{\epsilon}} |\mathbf{v}(r, \theta)|^2 r \, \mathrm{d}r \mathrm{d}\theta & \le \|\nabla \mathbf{v}\|_{L^2(B_{\mathrm{out}}^{i,j})}^2 \int_{r_{\epsilon}}^{R_{\epsilon}} r \ln\left(\frac{r}{r_{\epsilon}}\right) \, \mathrm{d}r \nonumber                                 \\
                                                                                                                                                                      & = \|\nabla \mathbf{v}\|_{L^2(B_{\mathrm{out}}^{i,j})}^2 \left[ \frac{R_{\epsilon}^2}{2} \ln\left(\frac{R_{\epsilon}}{r_{\epsilon}}\right) - \frac{R_{\epsilon}^2 - r_{\epsilon}^2}{4} \right].
    \end{align}
    Dropping the negative term and substituting the radii $R_{\epsilon} = C_{\mathrm{out}}\epsilon$ and $r_{\epsilon} = \alpha a_{\epsilon}$, the geometric bracket is bounded by $\frac{1}{2} C_{\mathrm{out}}^2 \epsilon^2 \ln\left(\frac{C_{\mathrm{out}} \epsilon}{\alpha a_{\epsilon}}\right)$. By defining the characteristic scaling $\sigma_{\epsilon}^2 := \epsilon^2 |\ln(a_{\epsilon}/\epsilon)|$, there exists a constant $C_1 > 0$ such that the integral evaluates to $\le C_1 \sigma_{\epsilon}^2$.

    Summing over all cells $P_\epsilon^{i,j}$ and accounting for the finite overlap of the circumscribed balls, we have $\sum_{i,j} \|\mathbf{v}\|_{L^2(P_\epsilon^{i,j})}^2 \le C_2 \sigma_{\epsilon}^2 \|\nabla \mathbf{v}\|_{L^2(\Omega_m)}^2$. Combining this with the boundary layer bound, we conclude
    \begin{equation}
        \|\mathbf{v}\|_{L^2(\Omega_m)}^2 = \|\mathbf{v}\|_{L^2(\Lambda_{\epsilon})}^2 + \sum_{i,j} \|\mathbf{v}\|_{L^2(P_\epsilon^{i,j})}^2 \le C_3 \sigma_{\epsilon}^2 \|\nabla \mathbf{v}\|_{L^2(\Omega_m)}^2.
    \end{equation}
    Taking the square root and combining this microscopic limit with the macroscopic bound $\|\mathbf{v}\|_{L^2(\Omega_m)} \le C_P \|\nabla \mathbf{v}\|_{L^2(\Omega_m)}$ completes the proof.
\end{proof}

\begin{lemma}[Existence of a Uniformly Bounded Test Pressure]\label{lemma:positivity}
    For the effective fluid domain $\Omega_m$ defined in Section \ref{sec:problem_main_results}, there exists a non-trivial test pressure field $q^* \in Q(\Omega_m) \cap H^1(\Omega_m)$ and a strictly positive constant $C_q > 0$, independent of the obstacle density $m$ (and thus independent of $\epsilon$), such that its gradient is bounded by its norm:
    \begin{equation}
        \|\nabla q^*\|_{L^2(\Omega_m)} \le C_q \|q^*\|_{L^2(\Omega_m)}.
    \end{equation}
\end{lemma}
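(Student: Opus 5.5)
We will construct $q^*$ explicitly as a global linear pressure field adjusted by its mean, following the strategy announced in the introduction. Let $x_1$ denote the streamwise coordinate and set
\begin{equation*}
    q^*(\mathbf{x}) := x_1 - \bar{x}_1, \qquad \bar{x}_1 := \frac{1}{|\Omega_m|}\int_{\Omega_m} x_1 \, \mathrm{d}x .
\end{equation*}
By construction $\int_{\Omega_m} q^* \,\mathrm{d}x = 0$. Since $q^*$ is the restriction of a smooth function on the bounded set $\Omega$, we have $q^* \in H^1(\Omega_m)$, so $q^* \in Q(\Omega_m)\cap H^1(\Omega_m)$. Its gradient is the constant vector $\mathbf{e}_1$. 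Hence $\|\nabla q^*\|_{L^2(\Omega_m)} = |\Omega_m|^{1/2} \le |\Omega|^{1/2}$, which is $\mathcal{O}(1)$ because $L_x, L_y$ (and $L_z$) are of order one.

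The substance of the proof is a lower bound on $\|q^*\|_{L^2(\Omega_m)}$ that is uniform in $\epsilon$. The first step is to bound the fluid volume from below. Each pillar cross-section $a_\epsilon T$ is contained in $B(0, R_0 a_\epsilon)$, and $a_\epsilon \le \kappa \epsilon$. Each active pillar therefore occupies at most the fraction $\kappa^2 |T|/|P_0| \le \kappa^2 \pi R_0^2/|P_0| < 1$ of its cell, and this fraction lies strictly below one because $B(0,R_0)\subset P_0$. The same holds column-wise in 3D. Since the cells are disjoint, every vertical strip $S_{a,b} := \{ a < x_1 < b\}$ whose width exceeds a fixed multiple of $\epsilon$ satisfies
\begin{equation*}
    |\Omega_m \cap S_{a,b}| \ge c_0 \, |\Omega\cap S_{a,b}| - C\epsilon,
\end{equation*}
with $c_0 = 1-\kappa^2\pi R_0^2/|P_0| > 0$.

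The second step is to estimate $\|q^*\|^2 = \int_{\Omega_m} |x_1-\bar{x}_1|^2\,\mathrm{d}x$. The idea is that $\bar{x}_1\in[0,L_x]$, so at least one of the two end strips $\{x_1 < L_x/4\}$ or $\{x_1 > 3L_x/4\}$ lies at distance at least $L_x/4$ from $\bar{x}_1$. On that strip $|q^*|\ge L_x/4$. Combining this with the first step gives
\begin{equation*}
    \|q^*\|_{L^2(\Omega_m)}^2 \ge \frac{L_x^2}{16}\Big(c_0 \frac{L_x}{4} L_y (L_z) - C\epsilon\Big) \ge c_1 > 0
\end{equation*}
for all $\epsilon$ below a fixed threshold. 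For the finitely many larger values of $\epsilon$, the quantity $\|q^*\|$ is positive simply because $\Omega_m$ is a nonempty open set, and only finitely many distinct geometries occur once the lattice is fixed. Alternatively, one can simply restrict attention to $\epsilon\le\epsilon_0$, consistent with the $\Theta$ notation. Setting $C_q := |\Omega|^{1/2}/c_1^{1/2}$ then completes the argument.

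The main obstacle is the volume lower bound in the end strips. We must make sure that the boundary layer $\Lambda_\epsilon$ and the partially covered cells near the strip edges contribute only $\mathcal{O}(\epsilon)$ errors. This is a counting argument: the number of cells cut by a line $x_1=\mathrm{const}$ is $\mathcal{O}(\epsilon^{-1})$, and each has area $\mathcal{O}(\epsilon^2)$. If this becomes delicate for the sheared lattice with $\delta\neq 0$, the fallback is to use the cruder bound $|\Omega_m|\ge c_0|\Omega| - C\epsilon$ together with the elementary variance inequality. That inequality says that a set of measure at least $\mu$ contained in an interval of length $L_x$ has variance of $x_1$ bounded below by a constant depending only on $\mu$ and $L_x$; the worst case is mass concentrated symmetrically at the ends, giving a variance of order $\mu^2$. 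This yields the same uniform conclusion.
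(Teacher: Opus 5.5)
Your argument is correct, at least for $\epsilon\le\epsilon_0$, which is enough for the $\Theta$-statement of Theorem~\ref{thm:main_scaling}. You share the test field $q^*=x_1-\bar x_1$ and the identity $\|\nabla q^*\|_{L^2(\Omega_m)}^2=|\Omega_m|$ with the paper, but you obtain the lower bound on $\|q^*\|_{L^2(\Omega_m)}$ differently.

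The paper uses only the total fluid area. By the bathtub principle, among subsets of $\Omega$ of area $|\Omega_m|$, the integral $\int(x_1-\bar x_1)^2$ is smallest for the vertical strip of width $|\Omega_m|/L_y$ centred at $\bar x_1$. This gives $\|q^*\|^2\ge|\Omega_m|^3/(12L_y^2)$, and $|\Omega_m|\ge(1-\phi_{\max})|\Omega|$ then yields the explicit $C_q=\sqrt{12}/((1-\phi_{\max})L_x)$.

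You instead use a pigeonhole observation: one end strip of width $L_x/4$ lies at distance at least $L_x/4$ from $\bar x_1$. You combine this with a local fluid-volume estimate in that strip. This avoids any rearrangement inequality. The cost is that counting the cells cut by the strip boundary introduces an $\mathcal{O}(\epsilon)$ error, hence the threshold $\epsilon_0$.

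Your patch for $\epsilon>\epsilon_0$ via ``finitely many geometries'' holds only if $m$ is integer-valued and $a_\epsilon$ is a fixed function of $\epsilon$. In the setup, $\epsilon$ is a continuous parameter.

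The paper's global route has no such error term. Every pillar lies in its own cell, the active cells are disjoint and contained in $\Omega$, and each pillar fills at most the fraction $\kappa^2|T|/|P_0|$ of its cell. So $|\Omega_m|\ge c_0|\Omega|$ holds exactly, not just up to $-C\epsilon$, and $C_q$ is uniform for every $\epsilon$.

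Your fallback is essentially the paper's proof, but you misstate its extremal case. Mass pushed to the two ends \emph{maximizes} the spread; the minimizer is the centred slab. The resulting bound $\mu^3/(12L_y^2)$ depends on the fluid measure $\mu$ and on the cross-section $L_y$ (resp.\ $L_yL_z$ in 3D), which caps the $x_1$-marginal density, not on $L_x$.
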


\begin{proof}
    Let $(x, y) \in \Omega = (0, L_x) \times (0, L_y)$ denote the spatial coordinates. Define the test pressure field $q^*(x, y) = x - \bar{x}$, where $\bar{x} = |\Omega_m|^{-1} \int_{\Omega_m} x \, \mathrm{d}x\mathrm{d}y$ is the spatial mean, ensuring $q^* \in Q(\Omega_m)$ (i.e., zero mean on the fluid domain). Since $\nabla q^* = (1, 0)^\top$, its gradient norm is exactly the measure of the fluid domain:
    \begin{equation}
        \|\nabla q^*\|_{L^2(\Omega_m)}^2 = \int_{\Omega_m} 1 \, \mathrm{d}x\mathrm{d}y = |\Omega_m|.
    \end{equation}

    To bound $\|q^*\|_{L^2(\Omega_m)}$ from below, we evaluate the geometric variance of the $x$-coordinate over the perforated domain $\Omega_m$:
    \begin{equation} \label{eq:norm_variance}
        \|q^*\|_{L^2(\Omega_m)}^2 = \int_{\Omega_m} (x - \bar{x})^2 \, \mathrm{d}x\mathrm{d}y.
    \end{equation}
    By the Bathtub principle (or symmetric decreasing rearrangement), for any domain $\Omega_m \subset \Omega$ with a fixed total area $|\Omega_m|$, the integral in \eqref{eq:norm_variance} is strictly minimized when $\Omega_m$ is rearranged into a single continuous vertical strip centered at $\bar{x}$ with width $w = |\Omega_m| / L_y$. Thus, we obtain the rigorous geometric lower bound:
    \begin{equation}
        \int_{\Omega_m} (x - \bar{x})^2 \, \mathrm{d}x\mathrm{d}y \ge \int_0^{L_y} \int_{-w/2}^{w/2} \xi^2 \, \mathrm{d}\xi \mathrm{d}y = L_y \frac{w^3}{12} = \frac{|\Omega_m|^3}{12 L_y^2}.
    \end{equation}

    Taking the ratio of the gradient norm squared to the $L^2$-norm squared, we immediately obtain a bound dependent purely on the fluid domain measure:
    \begin{equation}
        \frac{\|\nabla q^*\|_{L^2(\Omega_m)}^2}{\|q^*\|_{L^2(\Omega_m)}^2} \le \frac{|\Omega_m|}{|\Omega_m|^3 / (12 L_y^2)} = \frac{12 L_y^2}{|\Omega_m|^2}.
    \end{equation}

    To ensure this bound is uniformly independent of the obstacle density, we assume the maximum solid area fraction is strictly bounded away from complete blockage, denoted by $\phi_{max} \in (0, 1)$. Indeed, $\phi_{max}$ is a fixed geometric constant strictly less than unity, as $T \subset P_0$ and $a_\epsilon/\epsilon \le \kappa$. Physically, this ensures the existence of interconnected fluid channels and prevents the complete blockage of the device. Consequently, the effective fluid area satisfies $|\Omega_m| \ge (1 - \phi_{max})|\Omega| = (1 - \phi_{max}) L_x L_y$. Substituting this macroscopic bound yields:
    \begin{equation}
        \frac{\|\nabla q^*\|_{L^2(\Omega_m)}}{\|q^*\|_{L^2(\Omega_m)}} \le \frac{\sqrt{12} L_y}{|\Omega_m|} \le \frac{\sqrt{12}}{(1 - \phi_{max}) L_x} := C_q.
    \end{equation}
    This completes the proof.
\end{proof}

\begin{lemma}[Properties of the Restriction Operator]\label{lemma:restriction_operator}
    For the effective fluid domain $\Omega_m$ defined in Section \ref{sec:problem_main_results}, there exists a linear restriction operator $R_{\epsilon}: H_0^1(\Omega)^d \to H_0^1(\Omega_m)^d$ satisfying the following fundamental properties:
    \begin{enumerate}
        \item {Preservation of zero-trace functions}: For any $\mathbf{u} \in H_0^1(\Omega_m)^d$, extended by zero to $\Omega$ as $\tilde{\mathbf{u}}$, it holds that $R_{\epsilon}(\tilde{\mathbf{u}}) = \mathbf{u}$ in $\Omega_m$.
        \item {Controlled modification of divergence}: For any $\mathbf{u} \in H_0^1(\Omega)^d$, the divergence of the restricted field in $\Omega_m$ satisfies $\nabla \cdot R_{\epsilon}(\mathbf{u}) = \nabla \cdot \mathbf{u} + c_{i,j}$ within a local control volume $C_{\epsilon}^{i,j}$ surrounding each solid obstacle $T_{\epsilon}^{i,j}$, and $\nabla \cdot R_{\epsilon}(\mathbf{u}) = \nabla \cdot \mathbf{u}$ elsewhere. Crucially, {if $\nabla \cdot \mathbf{u} = 0$ identically inside the solid obstacles, then $\nabla \cdot R_{\epsilon}(\mathbf{u}) = \nabla \cdot \mathbf{u}$ everywhere in $\Omega_m$}.
        \item {Stability estimate}: For any $\mathbf{u} \in H_0^1(\Omega)^d$, the gradient of the restricted field satisfies the bound:
              \begin{equation}
                  \|\nabla R_{\epsilon}(\mathbf{u})\|_{L^2(\Omega_m)} \le C \left( \|\nabla \mathbf{u}\|_{L^2(\Omega)} + (1 + \sigma_{\epsilon}^{-1}) \|\mathbf{u}\|_{L^2(\Omega)} \right)
              \end{equation}
              where $C > 0$ is a constant independent of $\epsilon$.
    \end{enumerate}
\end{lemma}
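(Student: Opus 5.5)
We construct $R_\epsilon$ in the classical way of Tartar and Allaire, cell by cell, using a local restriction on a reference perforated cell which is then transported to each $P_\epsilon^{i,j}$ by the affine map $\mathbf{x}\mapsto \mathbf{x}_{i,j}+\epsilon\mathbf{y}$. In the boundary layer $\Lambda_\epsilon$, which contains no obstacles, we set $R_\epsilon\mathbf{u}=\mathbf{u}$.

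On the reference cell, let $\tau=a_\epsilon/\epsilon\le\kappa$. Consider the annular control volume $C=B(0,R_1)\setminus\tau T$, with $R_0<R_1$ chosen so that $\overline{B(0,R_1)}\subset P_0$; this is possible since $B(0,R_0)\subset P_0$. For $\mathbf{u}\in H^1(P_0)^d$ we define $R\mathbf{u}=\mathbf{u}$ on $P_0\setminus B(0,R_1)$. Inside the ball we replace $\mathbf{u}$ by $\mathbf{w}$, the solution of the Stokes-type (Bogovskii) problem
\begin{equation*}
\nabla\cdot\mathbf{w}=\nabla\cdot\mathbf{u}+\frac{1}{|C|}\int_{\tau T}\nabla\cdot\mathbf{u} \ \text{ in } C,\qquad \mathbf{w}=\mathbf{u} \text{ on } \partial B(0,R_1),\qquad \mathbf{w}=0 \text{ on } \partial(\tau T).
\end{equation*}
The constant $c_{i,j}$ is exactly what the compatibility condition requires: by the divergence theorem, $\int_{\partial B}\mathbf{u}\cdot\mathbf{n}=\int_{B}\nabla\cdot\mathbf{u}$. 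This gives property 2 directly, and $c_{i,j}=0$ when $\nabla\cdot\mathbf{u}=0$ in the obstacle.

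For property 1, if $\mathbf{u}$ already vanishes on $\tau T$, then $\mathbf{w}=\mathbf{u}$ solves the problem. To make the construction linear and single-valued, we define $\mathbf{w}$ as the Stokes solution, i.e. we minimize the Dirichlet energy subject to the constraints. If $\mathbf{u}$ itself is admissible and we instead use the operator $\mathbf{w}=\mathbf{u}-\mathcal{B}(\cdots)$, where the correction is driven only by the data on $\tau T$, the correction vanishes. So the definition we adopt is: first cut off $\mathbf{u}$ near the obstacle by a smooth radial cutoff, then correct the divergence defect with a Bogovskii operator on the annulus. This defect is supported where $\mathbf{u}$ was modified, and it vanishes when $\mathbf{u}|_{\tau T}=0$ only if the cutoff is applied to the part of $\mathbf{u}$ "seen" by the obstacle. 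To cleanly ensure this, we use the Allaire decomposition $\mathbf{u}=\mathbf{u}-\bar{\mathbf{u}}+\bar{\mathbf{u}}$ on a small ball around $\tau T$ and cut off $\bar{\mathbf{u}}$ only. I expect this step to need care, and I would follow Allaire's construction verbatim.

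The main obstacle is property 3, the stability estimate with the logarithmic factor. For constant-vector data the cutoff must go from the value on $\partial B(0,R_1)$ to $0$ on $\partial(\tau T)$. The optimal choice is the capacitary (logarithmic) profile $\log(r/\tau)/\log(R_1/\tau)$, whose Dirichlet energy is $\sim 2\pi/|\log\tau|$. On the reference cell this gives
\begin{equation*}
\|\nabla R\mathbf{u}\|_{L^2(P_0)}\le C\bigl(\|\nabla\mathbf{u}\|_{L^2(P_0)}+|\log\tau|^{-1/2}\|\mathbf{u}\|_{L^2(P_0)}+\|\mathbf{u}\|_{L^2(P_0)}\bigr).
\end{equation*}
The Bogovskii correction on the annulus $B(0,R_1)\setminus B(0,\tau R_0)$ has a constant that may degenerate as $\tau\to0$. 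To control it, I would place the correction in a fixed annulus $B(0,R_1)\setminus B(0,R_0)$ that is independent of $\tau$, which is possible since the divergence defect is a scalar constant plus terms supported there. Because $a_\epsilon\propto\epsilon$ here, $\tau$ is bounded below and this issue is benign, but I would keep the general form.

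Scaling back by $\mathbf{y}=(\mathbf{x}-\mathbf{x}_{i,j})/\epsilon$ produces $\epsilon^{-1}$ in front of the $L^2$ term, giving the cellwise bound
\begin{equation*}
\|\nabla R_\epsilon\mathbf{u}\|\le C\bigl(\|\nabla\mathbf{u}\|+\epsilon^{-1}(|\log\tau|^{-1/2}+1)\|\mathbf{u}\|\bigr).
\end{equation*}
Since $\epsilon^{-1}|\log\tau|^{-1/2}=\sigma_\epsilon^{-1}$, summing over the cells and the layer $\Lambda_\epsilon$ gives the stated estimate. The remaining $\epsilon^{-1}\|\mathbf{u}\|$ part must be absorbed into $C(1+\sigma_\epsilon^{-1})$. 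In our regime $a_\epsilon\propto\epsilon$, so $\sigma_\epsilon\sim\epsilon$ and $\epsilon^{-1}\lesssim\sigma_\epsilon^{-1}$, which is consistent.
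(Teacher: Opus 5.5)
There is a genuine gap in Property~1, and your argument for Property~3 proves a weaker bound than the one stated.

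\textbf{Property 1.} The problem you first write down prescribes only the divergence and the traces, so it has many solutions. The selection you then adopt is to minimise $\|\nabla\mathbf{w}\|_{L^2(C)}$ subject to these constraints. That minimiser is the Stokes solution with zero forcing, $-\Delta\mathbf{w}+\nabla q=0$. If $\mathbf{u}$ vanishes on $\tau T$, then $\mathbf{u}$ is admissible, but it is the minimiser only when $\Delta\mathbf{u}$ is a gradient in $C$. So in general $R_\epsilon\tilde{\mathbf{u}}\neq\mathbf{u}$.

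Your fallbacks do not repair this. A radial cutoff gives $\chi\mathbf{u}\neq\mathbf{u}$ in the transition zone. Cutting off only $\bar{\mathbf{u}}$ leaves $\mathbf{u}-\bar{\mathbf{u}}\neq 0$ on $\partial(\tau T)$, so the output is not even in $H^1_0$ of the perforated cell. Deferring to ``Allaire verbatim'' does not supply the missing step.

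The missing idea is what the paper does in its local Stokes problem: put $-\Delta\mathbf{u}$ on the right-hand side. Equivalently, choose $\mathbf{w}$ so that $\mathbf{w}-\mathbf{u}$ minimises $\|\nabla(\mathbf{w}-\mathbf{u})\|_{L^2(C)}$ among fields with divergence $c_{i,j}$, zero trace on $\partial B$, and trace $-\mathbf{u}$ on $\partial(\tau T)$. This is linear and single-valued. When $\mathbf{u}|_{\partial(\tau T)}=0$, so that $c_{i,j}=0$, all data of the correction vanish and $\mathbf{w}=\mathbf{u}$ by uniqueness. Your cutoff/decomposition field belongs in the proof of Property~3, as a competitor in this minimisation, not in the definition of $R_\epsilon$.

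\textbf{Property 3.} Your reference-cell bound carries an extra $\|\mathbf{u}\|_{L^2(P_0)}$, which scales to $\epsilon^{-1}\|\mathbf{u}\|$. Since $|\log\tau|^{-1/2}\le|\log\kappa|^{-1/2}$, your final estimate is just Tartar's $\|\nabla\mathbf{u}\|+\epsilon^{-1}\|\mathbf{u}\|$.

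That implies the stated bound only when $\sigma_\epsilon\sim\epsilon$, i.e.\ $a_\epsilon\propto\epsilon$. The lemma is posed for the geometry of Section~2, which only requires $a_\epsilon\le\kappa\epsilon$. For $a_\epsilon\ll\epsilon$ one has $\epsilon^{-1}\gg 1+\sigma_\epsilon^{-1}$, so your estimate does not give the claim. This is harmless for Theorem~1, which assumes $a_\epsilon\propto\epsilon$, but it is not the lemma as stated.

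The extra term is avoidable. Take the competitor $-\chi_1(\mathbf{u}-\bar{\mathbf{u}})-\chi_2\bar{\mathbf{u}}$, where $\chi_1$ is a fixed smooth cutoff and $\chi_2$ is your logarithmic profile. The first term is controlled by $\|\nabla\mathbf{u}\|$ alone via Poincar\'e on the ball, with no $L^2$ term. The second contributes $|\log\tau|^{-1/2}|\bar{\mathbf{u}}|$.

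The resulting divergence defect, however, contains $\nabla\chi_2\cdot\bar{\mathbf{u}}$. This is spread over all of $\tau R_0<|\mathbf{y}|<R_1$, and the constant $c_{i,j}$ lives on all of $C$. So your plan of placing the Bogovskii correction in a fixed annulus does not apply. What you need is a Bogovskii bound on $B(0,R_1)\setminus\tau T$ that is uniform in $\tau$; that uniform local estimate is exactly what the paper imports from Allaire.

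A minor point: a radius $R_1>R_0$ with $\overline{B(0,R_1)}\subset P_0$ need not exist. Any $R_1\in(\kappa R_0,R_0)$ works, since $\tau T\subset B(0,\kappa R_0)$.
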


\begin{proof}[Proof of Lemma \ref{lemma:restriction_operator}]
    We construct the restriction operator $R_{\epsilon}$ by solving local Stokes problems. For each active cell $(i,j) \in \mathcal{I}_{\epsilon}$, let $B_{\epsilon}^{i,j} \subset P_\epsilon^{i,j}$ be an inscribed open ball strictly covering the solid obstacle $T_{\epsilon}^{i,j}$, and define the fluid control volume $C_{\epsilon}^{i,j} = B_{\epsilon}^{i,j} \setminus T_{\epsilon}^{i,j}$.

    For any $\mathbf{u} \in H_0^1(\Omega)^d$, we set $R_{\epsilon}(\mathbf{u})$ piecewise. Away from the obstacles:
    \begin{equation}
        R_{\epsilon}(\mathbf{u}) = \mathbf{u} \quad \text{in } \Omega \setminus \bigcup_{(i,j) \in \mathcal{I}_{\epsilon}} B_{\epsilon}^{i,j},
    \end{equation}
    and inside the solid obstacles:
    \begin{equation}
        R_{\epsilon}(\mathbf{u}) = 0 \quad \text{in } T_{\epsilon}^{i,j}.
    \end{equation}

    Inside $C_{\epsilon}^{i,j}$, we define $R_{\epsilon}(\mathbf{u}) = \mathbf{v}$ as the solution to the local Stokes problem:
    \begin{equation} \label{eq:local_stokes}
        \begin{cases}
            -\Delta \mathbf{v} + \nabla q = -\Delta \mathbf{u}          & \text{in } C_{\epsilon}^{i,j}          \\
            \nabla \cdot \mathbf{v} = \nabla \cdot \mathbf{u} + c_{i,j} & \text{in } C_{\epsilon}^{i,j}          \\
            \mathbf{v} = \mathbf{u}                                     & \text{on } \partial B_{\epsilon}^{i,j} \\
            \mathbf{v} = 0                                              & \text{on } \partial T_{\epsilon}^{i,j}
        \end{cases}
    \end{equation}
    where $c_{i,j}$ is a constant. Well-posedness requires the compatibility condition:
    \begin{equation} \label{eq:compatibility}
        \int_{C_{\epsilon}^{i,j}} (\nabla \cdot \mathbf{v}) \, \mathrm{d}x = \int_{\partial C_{\epsilon}^{i,j}} \mathbf{v} \cdot n \, \mathrm{d}s = \int_{\partial B_{\epsilon}^{i,j}} \mathbf{u} \cdot n_{\mathrm{out}} \, \mathrm{d}s - \int_{\partial T_{\epsilon}^{i,j}} 0 \cdot n_{\mathrm{in}} \, \mathrm{d}s.
    \end{equation}
    Applying the divergence theorem to $\mathbf{u}$ over $B_{\epsilon}^{i,j}$ yields:
    \begin{equation} \label{eq:divergence_ball}
        \int_{\partial B_{\epsilon}^{i,j}} \mathbf{u} \cdot n_{\mathrm{out}} \, \mathrm{d}s = \int_{B_{\epsilon}^{i,j}} \nabla \cdot \mathbf{u} \, \mathrm{d}x = \int_{C_{\epsilon}^{i,j}} \nabla \cdot \mathbf{u} \, \mathrm{d}x + \int_{T_{\epsilon}^{i,j}} \nabla \cdot \mathbf{u} \, \mathrm{d}x.
    \end{equation}
    Substituting \eqref{eq:divergence_ball} into \eqref{eq:compatibility} and using the divergence equation from \eqref{eq:local_stokes}, we obtain:
    \begin{equation}
        \int_{C_{\epsilon}^{i,j}} (\nabla \cdot \mathbf{u} + c_{i,j}) \, \mathrm{d}x = \int_{C_{\epsilon}^{i,j}} \nabla \cdot \mathbf{u} \, \mathrm{d}x + \int_{T_{\epsilon}^{i,j}} \nabla \cdot \mathbf{u} \, \mathrm{d}x.
    \end{equation}
    This uniquely determines $c_{i,j}$:
    \begin{equation}
        c_{i,j} = \frac{1}{|C_{\epsilon}^{i,j}|} \int_{T_{\epsilon}^{i,j}} \nabla \cdot \mathbf{u} \, \mathrm{d}x,
    \end{equation}
    which establishes Property 2.

    For Property 1, if $\mathbf{u} \in H_0^1(\Omega_m)^d$ is extended by zero to $\Omega$, then $\mathbf{u} = 0$ on $\partial T_{\epsilon}^{i,j}$ and $\nabla \cdot \mathbf{u} = 0$ identically in $T_{\epsilon}^{i,j}$. Consequently, $c_{i,j} = 0$, and \eqref{eq:local_stokes} admits the unique solution $\mathbf{v} = \mathbf{u}$ in $C_{\epsilon}^{i,j}$.

    For Property 3, we rescale the physical ball $B_{\epsilon}^{i,j}$ to a unit ball $\hat{B}$ via $y = (x - x_{i,j})/\epsilon$. The obstacle transforms into $\hat{T}_{\eta}$ with characteristic size $\eta = a_{\epsilon}/\epsilon$, defining the reference annulus $\hat{C}_{\eta} = \hat{B} \setminus \hat{T}_{\eta}$. Defining $\hat{\mathbf{u}}(y) = \mathbf{u}(x_{i,j} + \epsilon y)$ and $\hat{\mathbf{v}}(y) = \mathbf{v}(x_{i,j} + \epsilon y)$, standard Bogovskii estimates \cite[Lemma 2.2.5]{Allaire1991} yield:
    \begin{equation}
        \|\hat{\nabla} \hat{\mathbf{v}}\|_{L^2(\hat{C}_{\eta})}^2 \le C \left( \|\hat{\nabla} \hat{\mathbf{u}}\|_{L^2(\hat{B})}^2 + |\log \eta|^{-1} \|\hat{\mathbf{u}}\|_{L^2(\hat{B})}^2 \right).
    \end{equation}

    Scaling back to physical variables, we use $\|\hat{\nabla} \hat{\mathbf{v}}\|_{L^2(\hat{C}_{\eta})}^2 = \|\nabla \mathbf{v}\|_{L^2(C_{\epsilon}^{i,j})}^2$, $\|\hat{\nabla} \hat{\mathbf{u}}\|_{L^2(\hat{B})}^2 = \|\nabla \mathbf{u}\|_{L^2(B_{\epsilon}^{i,j})}^2$, and $\|\hat{\mathbf{u}}\|_{L^2(\hat{B})}^2 = \epsilon^{-2} \|\mathbf{u}\|_{L^2(B_{\epsilon}^{i,j})}^2$:
    \begin{equation}
        \|\nabla \mathbf{v}\|_{L^2(C_{\epsilon}^{i,j})}^2 \le C \left( \|\nabla \mathbf{u}\|_{L^2(B_{\epsilon}^{i,j})}^2 + \frac{|\log \eta|^{-1}}{\epsilon^2} \|\mathbf{u}\|_{L^2(B_{\epsilon}^{i,j})}^2 \right).
    \end{equation}
    Recalling $\eta = a_{\epsilon}/\epsilon$ and $\sigma_{\epsilon} = \epsilon |\log(a_{\epsilon}/\epsilon)|^{1/2}$, the penalty coefficient scales as:
    \begin{equation}
        \frac{|\log \eta|^{-1}}{\epsilon^2} \propto \frac{|\log(a_{\epsilon}/\epsilon)|^{-1}}{\epsilon^2} = \sigma_{\epsilon}^{-2},
    \end{equation}
    yielding the local estimate:
    \begin{equation}
        \|\nabla \mathbf{v}\|_{L^2(C_{\epsilon}^{i,j})}^2 \le C \left( \|\nabla \mathbf{u}\|_{L^2(B_{\epsilon}^{i,j})}^2 + \sigma_{\epsilon}^{-2} \|\mathbf{u}\|_{L^2(B_{\epsilon}^{i,j})}^2 \right).
    \end{equation}

    Summing over all active cells $(i,j) \in \mathcal{I}_{\epsilon}$ and noting the topologically bounded finite overlap of $B_{\epsilon}^{i,j}$, we obtain the global bound:
    \begin{equation}
        \|\nabla R_{\epsilon}(\mathbf{u})\|_{L^2(\Omega_m)}^2 \le C' \left( \|\nabla \mathbf{u}\|_{L^2(\Omega)}^2 + \sigma_{\epsilon}^{-2} \|\mathbf{u}\|_{L^2(\Omega)}^2 \right).
    \end{equation}
    Taking the square root and applying $\sqrt{a^2 + b^2} \le a + b$ yields the final stability estimate:
    \begin{equation}
        \|\nabla R_{\epsilon}(\mathbf{u})\|_{L^2(\Omega_m)} \le C'' \left( \|\nabla \mathbf{u}\|_{L^2(\Omega)} + \sigma_{\epsilon}^{-1} \|\mathbf{u}\|_{L^2(\Omega)} \right),
    \end{equation}
    completing the proof.
\end{proof}

\section{Proofs of the Main Theorems}\label{sec:proofs_main}

With the foundational geometric and functional estimates established in Section \ref{sec:proofs}, we now present the rigorous proofs for the sharp pre-asymptotic degradation rate of the pressure-velocity coupling stability in the obstacle array domain $\Omega_m$.

\subsection{Proof of Theorem \ref{thm:main_scaling}: The Upper Bound}

\begin{proof}[Proof of the upper bound]
    The continuous inf-sup constant for the domain $\Omega_m$ is defined by the variational problem:
    \begin{equation} \label{eq:infsup_def_cp}
        \beta(\Omega_m) = \inf_{q \in L_0^2(\Omega_m) \setminus \{0\}} \sup_{\mathbf{v} \in H_0^1(\Omega_m)^d \setminus \{\mathbf{0}\}} \frac{\int_{\Omega_m} q \nabla \cdot \mathbf{v} \, \mathrm{d}x}{\|q\|_{L^2(\Omega_m)} \|\nabla \mathbf{v}\|_{L^2(\Omega_m)}}.
    \end{equation}
    To establish an upper bound, it suffices to evaluate the supremum quotient for a specific test pressure field. We select the global linear pressure field $q^*(x, y) = x - \bar{x} \in Q(\Omega_m)$ constructed in Lemma \ref{lemma:positivity}.

    By integration by parts, we get the following estimate.
    \begin{equation}
        \int_{\Omega_m} q^* \nabla \cdot \mathbf{v} \, \mathrm{d}x = -\int_{\Omega_m} \nabla q^* \cdot \mathbf{v} \, \mathrm{d}x \le \|\nabla q^*\|_{L^2(\Omega_m)} \|\mathbf{v}\|_{L^2(\Omega_m)} \le C_P \sigma_{\epsilon} \|\nabla q^*\|_{L^2(\Omega_m)} \|\nabla \mathbf{v}\|_{L^2(\Omega_m)},
    \end{equation}
    where we used the global Poincaré inequality from Lemma \ref{lemma:poincare}.
    We insert this bound into the quotient in \eqref{eq:infsup_def} evaluated at $q^*$:
    \begin{equation}
        \beta(\Omega_m) \le \sup_{\mathbf{v} \in H_0^1(\Omega_m)^d} \frac{C_P \sigma_{\epsilon} \|\nabla q^*\|_{L^2(\Omega_m)} \|\nabla \mathbf{v}\|_{L^2(\Omega_m)}}{\|q^*\|_{L^2(\Omega_m)} \|\nabla \mathbf{v}\|_{L^2(\Omega_m)}} = C_P \sigma_{\epsilon} \frac{\|\nabla q^*\|_{L^2(\Omega_m)}}{\|q^*\|_{L^2(\Omega_m)}}.
    \end{equation}
    By Lemma \ref{lemma:positivity}, the ratio $\|\nabla q^*\|_{L^2(\Omega_m)} / \|q^*\|_{L^2(\Omega_m)}$ is bounded by a constant $C_q$ strictly independent of the obstacle density. Therefore:
    \begin{equation}
        \beta(\Omega_m) \le (C_P C_q) \sigma_{\epsilon}.
    \end{equation}
    For the specific case where the obstacle size scales proportionally with the cell size ($a_{\epsilon} = \kappa \epsilon$), the parameter simplifies to $\sigma_{\epsilon} = \epsilon |\log \kappa|^{1/2} = \mathcal{O}(\epsilon)$. Since the number of cells per dimension is $m \propto \epsilon^{-1}$, we conclude that $\beta(\Omega_m) \le \mathcal{O}(m^{-1})$, completing the proof.
\end{proof}

\subsection{Proof of Theorem \ref{thm:main_scaling}: The Lower Bound and Sharpness}

\begin{proof}[Proof of lower bound]
    Proving a lower bound for the continuous inf-sup constant $\beta(\Omega_m) \ge C^*$ is equivalent to showing that, for any $q \in L_0^2(\Omega_m)$, there exists a velocity field $\mathbf{v} \in H_0^1(\Omega_m)^d$ such that $\nabla \cdot \mathbf{v} = q$ and $\|\nabla \mathbf{v}\|_{L^2(\Omega_m)} \le (C^*)^{-1} \|q\|_{L^2(\Omega_m)}$.

    Given $q \in L_0^2(\Omega_m)$, we extend it by zero to the entire macroscopic domain $\Omega$ to define $\tilde{q}$. Since the extension is by zero, $\tilde{q} \in L_0^2(\Omega)$ and $\|\tilde{q}\|_{L^2(\Omega)} = \|q\|_{L^2(\Omega_m)}$. Because the macroscopic domain $\Omega$ is fixed and independent of $\epsilon$, standard estimate guarantees the existence of a global velocity field $\mathbf{V} \in H_0^1(\Omega)^d$ satisfying:
    \begin{equation}
        \nabla \cdot \mathbf{V} = \tilde{q} \quad \text{in } \Omega, \quad \text{and} \quad \|\nabla \mathbf{V}\|_{L^2(\Omega)} \le C_B \|\tilde{q}\|_{L^2(\Omega)} = C_B \|q\|_{L^2(\Omega_m)},
    \end{equation}
    where $C_B$ is the standard LBB constant for the unperforated domain $\Omega$.

    To enforce the no-slip condition on the obstacle boundaries while preserving the target divergence, we apply the restriction operator constructed in Lemma \ref{lemma:restriction_operator}, defining $\mathbf{v} = R_{\epsilon}(\mathbf{V}) \in H_0^1(\Omega_m)^d$.

    According to Property 2 of Lemma \ref{lemma:restriction_operator}, because $\nabla \cdot \mathbf{V} = \tilde{q} = 0$ identically inside all solid obstacles $T_{\epsilon}^{i,j}$, the restriction operator perfectly preserves the divergence field globally without requiring local corrections (i.e., $c_{i,j} = 0$). Thus, $\nabla \cdot \mathbf{v} = \nabla \cdot \mathbf{V} = q$ everywhere in $\Omega_m$.

    To evaluate the norm of the constructed field, we invoke the stability estimate (Property 3) of Lemma \ref{lemma:restriction_operator}:
    \begin{equation}
        \|\nabla \mathbf{v}\|_{L^2(\Omega_m)} \le C_R \left( \|\nabla \mathbf{V}\|_{L^2(\Omega)} + (1 + \sigma_{\epsilon}^{-1}) \|\mathbf{V}\|_{L^2(\Omega)} \right).
    \end{equation}
    We bound the $L^2$-norm of the macroscopic field $V$ using the standard Poincaré inequality on the fixed domain $\Omega$, yielding $\|V\|_{L^2(\Omega)} \le C_0 \|\nabla V\|_{L^2(\Omega)}$. Substituting this and the global infsup constant $C_B$ into the stability estimate yields:
    \begin{align} \label{eq:lower_bound_est}
        \|\nabla \mathbf{v}\|_{L^2(\Omega_m)} & \le C_R \left( 1 + C_0(1 + \sigma_{\epsilon}^{-1}) \right) \|\nabla \mathbf{V}\|_{L^2(\Omega)} \nonumber \\
                                              & \le C_R C_B \left( 1 + C_0 + C_0 \sigma_{\epsilon}^{-1} \right) \|q\|_{L^2(\Omega_m)}.
    \end{align}
    For a small $\epsilon$ (and consequently a large $\sigma_{\epsilon}^{-1}$), the scaling term $\sigma_{\epsilon}^{-1}$ dominates the constant terms. Therefore, there exists an asymptotic constant $C' > 0$ such that:
    \begin{equation}
        \|\nabla \mathbf{v}\|_{L^2(\Omega_m)} \le C' \sigma_{\epsilon}^{-1} \|q\|_{L^2(\Omega_m)}.
    \end{equation}
    This establishes that the norm of the continuous Bogovskii operator on the perforated domain $\Omega_m$ is bounded from above by $\mathcal{O}(\sigma_{\epsilon}^{-1})$. Inverting this norm bound immediately provides the lower bound for the inf-sup constant:
    \begin{equation}
        \beta(\Omega_m) = \inf_{q} \sup_{\mathbf{v}} \frac{\int_{\Omega_m} q \nabla \cdot \mathbf{v}}{\|q\| \|\nabla \mathbf{v}\|} \ge \frac{1}{C' \sigma_{\epsilon}^{-1}} = C'' \sigma_{\epsilon}.
    \end{equation}
    Combined with the upper bound $\beta(\Omega_m) \le (C_P C_q) \sigma_{\epsilon}$ from Theorem \ref{thm:main_scaling}, we conclude the sharp estimate $\beta(\Omega_m) = \Theta(\sigma_{\epsilon})$. For $a_{\epsilon} \propto \epsilon$, this yields $\beta(\Omega_m) = \Theta(\epsilon) = \Theta(m^{-1})$, completing the proof.
\end{proof}

\begin{remark}[Discrete inf-sup Degradation] \label{rmk:discrete_infsup}
    The continuous degradation rate $\beta(\Omega_m) = \Theta(m^{-1})$ is directly inherited by the discrete saddle-point system. For stable conforming mixed finite elements (e.g., Taylor-Hood) admitting a mesh-independent Fortin operator, the discrete inf-sup constant is bounded below by $\beta_h \gtrsim  \beta(\Omega_m) = \Theta(m^{-1})$ \cite{boffi2013mixed,Chen2014}. Conversely, provided the discrete pressure space $Q_h$ contains global linear polynomials, evaluating the discrete inf-sup quotient with $q^*(x,y)$ yields the sharp upper bound $\beta_h \le \mathcal{O}(m^{-1})$. Thus, for standard stable discretizations, the discrete inf-sup constant degradation remains $\beta_h = \Theta(m^{-1})$. Detailed proofs are omitted as they follow straightforwardly from standard mixed finite element theory.
\end{remark}

\section{Numerical Consequences of the Sharp inf-sup Bound}\label{sec:numerical}
In this section, we present a series of numerical experiments designed to verify the sharp pre-asymptotic estimate of the continuous LBB constant established in Theorem \ref{thm:main_scaling} and to map this geometric penalty onto its two practical manifestations: the amplification of the a priori discretization error and the stagnation of standard iterative solvers. We then evaluate the robustness of the proposed adaptively scaled Augmented Lagrangian (AL) preconditioner as the pillar density $m$ increases.

\subsection{Discrete Formulation and Matrix Definitions}

To strictly isolate the continuous, geometry-induced inf-sup degradation from any artificial numerical instabilities, we employ the standard Galerkin finite element method using the inf-sup stable Taylor-Hood element pair ($\mathbb{P}_2-\mathbb{P}_1$). This classical choice guarantees that the discrete LBB condition is inherently satisfied, ensuring that any spectral deterioration observed in the algebraic system originates purely from the micro-structural topology rather than spatial discretization artifacts.

Let $\mathcal{T}_h$ be a shape-regular finite element triangulation of the domain. We define the conforming discrete velocity and pressure spaces as:
\begin{align}
    V_h & = \{ \mathbf{v}_h \in C^0(\overline{\Omega})^d : \mathbf{v}_h|_K \in [\mathbb{P}_2(K)]^d, \forall K \in \mathcal{T}_h \} \cap W_0^{1,2}(\Omega)^d, \\
    Q_h & = \{ q_h \in C^0(\overline{\Omega}) : q_h|_K \in \mathbb{P}_1(K), \forall K \in \mathcal{T}_h \} \cap L_0^2(\Omega),
\end{align}
where $d \in \{2, 3\}$ is the spatial dimension.

The discrete variational formulation seeks a velocity-pressure pair $(\mathbf{u}_h, p_h) \in V_h \times Q_h$ such that:
\begin{align}
    a(\mathbf{u}_h, \mathbf{v}_h) + b(\mathbf{v}_h, p_h) & = (f, \mathbf{v}_h), \quad \forall \mathbf{v}_h \in V_h, \\
    b(\mathbf{u}_h, q_h)                                 & = 0, \quad \forall q_h \in Q_h.
\end{align}
Here, $b(\mathbf{v}, q) = -\int q \nabla \cdot \mathbf{v} \, \mathrm{d}x$ represents the divergence constraint, and the bilinear form $a(\mathbf{u}, \mathbf{v}) = \int \mu \nabla \mathbf{u} : \nabla \mathbf{v} \, \mathrm{d}x$ encapsulates the viscous dissipation.

Upon choosing standard nodal basis functions for the spaces $V_h$ and $Q_h$, this formulation leads to the classical generalized saddle-point algebraic system:
\begin{equation} \label{eq:saddle_point_system}
    \mathcal{A} \mathbf{x} = \mathbf{b} \quad \implies \quad
    \begin{pmatrix}
        A & B^T \\
        B & 0
    \end{pmatrix}
    \begin{pmatrix}
        \mathbf{u} \\
        \mathbf{p}
    \end{pmatrix}
    =
    \begin{pmatrix}
        \mathbf{f} \\
        \mathbf{0}
    \end{pmatrix},
\end{equation}
where $\mathbf{u} \in \mathbb{R}^{n_u}$ and $\mathbf{p} \in \mathbb{R}^{n_p}$ denote the vectors of velocity and pressure degrees of freedom, respectively. The global block matrices are defined as follows:
\begin{itemize}
    \item $A \in \mathbb{R}^{n_u \times n_u}$: the symmetric positive-definite velocity stiffness matrix representing the discrete operator associated with $a(\mathbf{u}, \mathbf{v})$. In the context of our perforated domain analysis, the condition number of this block inherently depends on the obstacle density.
    \item $B \in \mathbb{R}^{n_p \times n_u}$: the discrete negative divergence matrix associated with $b(\mathbf{v}, q)$.
    \item $B^T \in \mathbb{R}^{n_u \times n_p}$: the discrete gradient matrix.
\end{itemize}

With this algebraic framework established, the pressure Schur complement, explicitly defined as $S = B A^{-1} B^T$, becomes the central object of our pre-asymptotic stability analysis.

The primary objective of this section is twofold: first, to numerically validate the exact $\Theta(m^{-1})$ pre-asymptotic degradation rate of the discrete inf-sup constant established in our theoretical analysis, which translates algebraically to the minimal generalized eigenvalue of the scaled Schur complement; second, to inspect the behavior of the discrete solution as the pillar density increases, which is dominated not only by the quasi-optimal convergence, but also by the inf-sup constant; and third, to demonstrate how this analytical bound provides a rigorous foundation for choosing the penalty parameter in Augmented Lagrangian (AL) stabilization.

\subsection{Discrete inf-sup Constant Degrades as $\Theta(m^{-1})$}\label{subsec:validation_infsup}

To verify Theorem \ref{thm:main_scaling}, we compute the discrete inf-sup constant $\beta_h$ for a sequence of domains $\Omega_m$ with increasing pillar density $m$. The discrete constant is evaluated by solving the generalized eigenvalue problem $B A^{-1} B^T q = \lambda M_p q$, where $M_p$ is the pressure mass matrix, and extracting the square root of the smallest non-zero eigenvalue: $\beta_h = \sqrt{\lambda_{min \neq 0}}$.

We consider a unit global bounding channel $\Omega = (0,1)^2$ and partition it into an $m \times m$ unshifted Bravais lattice ($\delta = 0$). Circular pillars with a fixed packing ratio $\kappa = 0.25$ are placed at the center of each active cell. We utilize standard Taylor-Hood ($P_2-P_1$) finite elements, ensuring that the mesh size $h$ is refined sufficiently at each density level to capture the local shear layers.

\begin{figure}[htbp]
    \centering
    \includegraphics[width=0.6\textwidth]{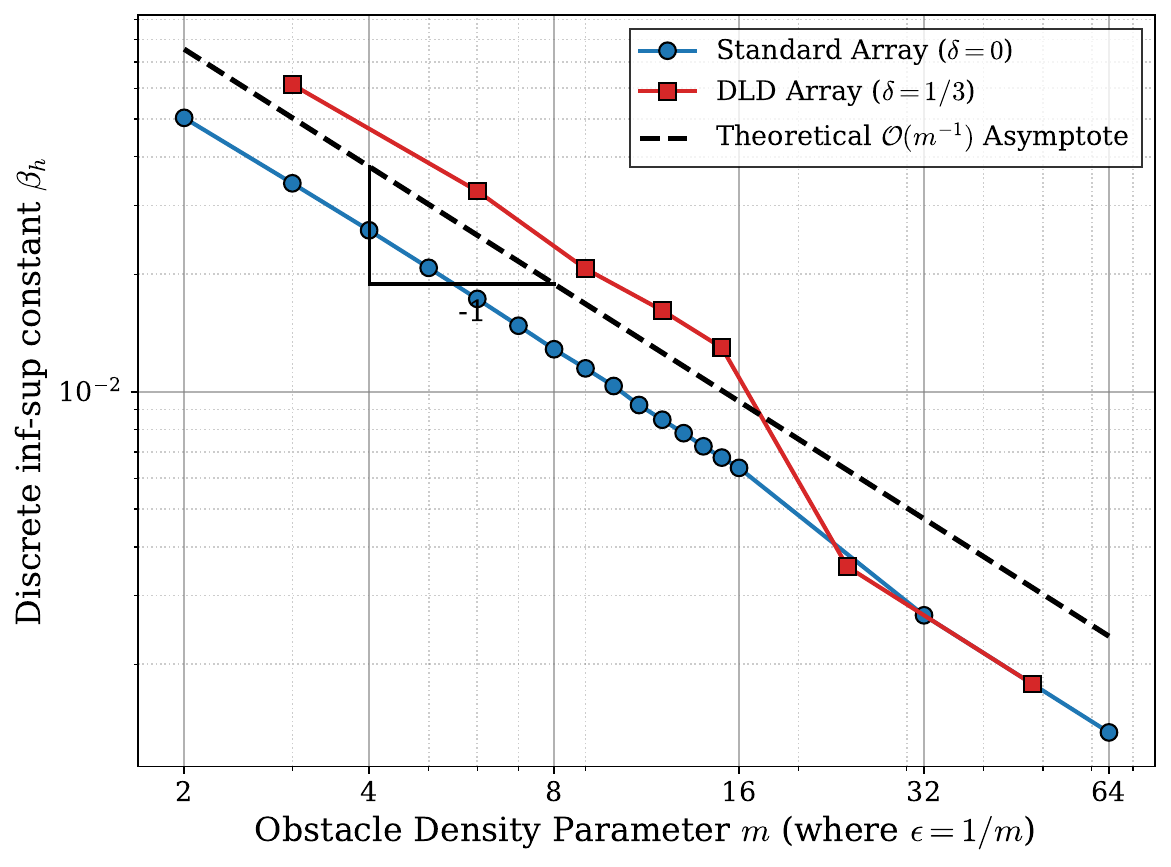}
    \caption{Log-log plot of the discrete inf-sup constant $\beta_h$ as a function of the pillar density $m$. The computed values precisely track the theoretical $\Theta(m^{-1})$ asymptote (indicated by the reference line with slope $-1$), confirming the sharpness of the pre-asymptotic degradation.}
    \label{fig:infsup_degradation}
\end{figure}

As illustrated in Figure \ref{fig:infsup_degradation}, the computed discrete constants align with the theoretical $\Theta(m^{-1})$ trajectory. This numerical evidence corroborates our proof that the deterioration of the saddle-point stability is governed by the increasing pillar density, and that this geometric penalty is captured by the discrete inf-sup constant.

The discrete inf-sup constant tracks the theoretical $\Theta(m^{-1})$ slope across all tested densities. The geometric penalty is fully inherited by the discrete system; it is not an artifact of the discretization.

\subsection{inf-sup Degradation Amplifies Pressure Error Under Dirichlet Constraints}\label{subsec:validation_convergence}

\subsubsection{Experimental Setup and Baseline Convergence}

To investigate the implications of the stability degradation $\beta_h = \Theta(m^{-1})$ established in our theoretical analysis, we design two strictly controlled physical experiments. We consider Stokes flow through a deterministic lateral displacement (DLD) micro-pillar array without volumetric body forces ($\mathbf{f} = \mathbf{0}$). Let $\Gamma_{\mathrm{in}}$, $\Gamma_{\mathrm{out}}$, and $\Gamma_{\mathrm{wall}}$ denote the inlet, outlet, and no-slip boundary (including channel walls and micro-pillars), respectively. The computational domain is defined in Figure \ref{fig:exp_setup}.
To comprehensively test the system, we investigate two distinct macroscopic driving mechanisms:

\begin{enumerate}
    \item \textbf{Setup A: Pressure-Driven Flow.} We impose a macroscopic pressure drop ($\Delta p = m^2$) via normal stress Neumann boundary conditions at the inlet and outlet:
          \begin{equation} \label{eq:bc_pressure}
              \begin{aligned}
                  (-p \mathbf{I} + \nabla \mathbf{u}) \cdot \mathbf{n} & = -p_{\mathrm{in}} \mathbf{n} \quad \text{on } \Gamma_{\mathrm{in}},   \\
                  (-p \mathbf{I} + \nabla \mathbf{u}) \cdot \mathbf{n} & = -p_{\mathrm{out}} \mathbf{n} \quad \text{on } \Gamma_{\mathrm{out}},
              \end{aligned}
          \end{equation}
          where $p_{\mathrm{in}} = m^2$, $p_{\mathrm{out}} = 0.0$, and $\mathbf{n}$ is the outward unit normal vector.

    \item \textbf{Setup B: Velocity-Driven Pipe Flow.} We impose a fixed macroscopic parabolic velocity profile at both the inlet and the outlet (pure Dirichlet conditions) to enforce mass conservation:
          \begin{equation} \label{eq:bc_velocity}
              \begin{aligned}
                  \mathbf{u} & = \mathbf{u}_{D}(y) \quad \text{on } \Gamma_{\mathrm{in}} \cup \Gamma_{\mathrm{out}},
              \end{aligned}
          \end{equation}
          where $\mathbf{u}_{D}(y) = \left[ 4 U_{\max} y (L_y - y) / L_y^2, \, 0 \right]^T$ with $U_{\max} = 1.0$.
\end{enumerate}
\begin{figure}
    \centering
    \includegraphics[width=0.6\textwidth]{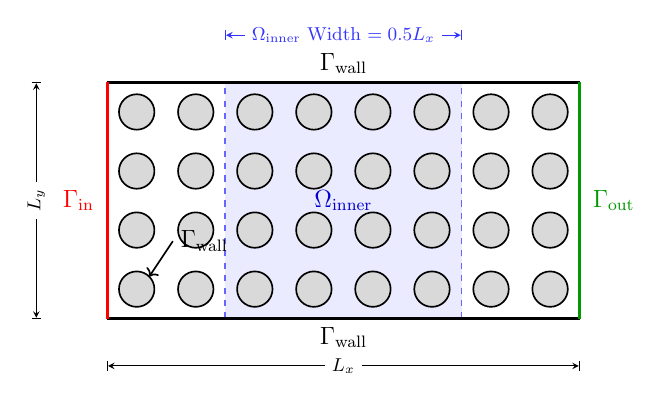}
    \caption{Schematic of the experimental setup for the DLD micro-pillar array. The computational domain $\Omega$ is defined as $(0, L_x) \times (0, L_y)$, where $L_x = 2.0$ and $L_y = 1.0$. The array consists of $2m \times m$ unit cells, each containing a circular pillar of radius $r = 0.3 \epsilon$ located at $(\epsilon(i+0.5), \epsilon(j+0.5))$ for $i = 0, 1, \dots, 2m-1$ and $j = 0, 1, \dots, m-1$, with $\epsilon = 1/m$. The interior evaluation subdomain $\Omega_{\mathrm{inner}}$ is defined as $(0.25L_x, 0.75L_x) \times (0, L_y)$.}
    \label{fig:exp_setup}
\end{figure}

In both cases, standard no-slip Dirichlet conditions are strictly applied to all physical obstacles and bounding walls:
\begin{equation} \label{eq:bc_wall}
    \mathbf{u} = \mathbf{0} \quad \text{on } \Gamma_{\mathrm{wall}}.
\end{equation}

To isolate the discretization error induced by the degraded inf-sup stability from geometric approximation errors (i.e., variational crimes \cite{brenner2008mathematical}), we adopt a hierarchical polynomial elevation strategy \cite{Bank1985}. The reference true solution $(\mathbf{u}_{\mathrm{ref}}, p_{\mathrm{ref}})$ is computed using the high-order $P_3-P_2$ element on the exactly identical second-order isoparametric curved mesh as the targeted Taylor-Hood ($P_2-P_1$) approximation $(\mathbf{u}_h, p_h)$. Relative and absolute errors are strictly evaluated within an interior macroscopic subdomain $\Omega_{\mathrm{inner}} := (0.25L_x, 0.75L_x) \times (0, L_y)$ to bypass corner singularities.

Specifically, the relative errors for the velocity $L^2$-norm, velocity $H^1$-seminorm, and pressure $L^2$-norm are defined respectively as:
\begin{equation} \label{eq:error_velocity}
    e_{\mathbf{u}, L^2} = \frac{\|\mathbf{u}_h - \mathbf{u}_{\mathrm{ref}}\|_{L^2(\Omega_{\mathrm{inner}})}}{\|\mathbf{u}_{\mathrm{ref}}\|_{L^2(\Omega_{\mathrm{inner}})}}, \quad
    e_{\mathbf{u}, H^1} = \frac{\|\nabla \mathbf{u}_h - \nabla \mathbf{u}_{\mathrm{ref}}\|_{L^2(\Omega_{\mathrm{inner}})}}{\|\nabla \mathbf{u}_{\mathrm{ref}}\|_{L^2(\Omega_{\mathrm{inner}})}},
\end{equation}
\begin{equation} \label{eq:error_pressure}
    e_{p, L^2} = \frac{\|(p_h - \bar{p}_h) - (p_{\mathrm{ref}} - \bar{p}_{\mathrm{ref}})\|_{L^2(\Omega_{\mathrm{inner}})}}{\|p_{\mathrm{ref}} - \bar{p}_{\mathrm{ref}}\|_{L^2(\Omega_{\mathrm{inner}})}},
\end{equation}
where $\bar{p} = \frac{1}{|\Omega_{\mathrm{inner}}|} \int_{\Omega_{\mathrm{inner}}} p \, \mathrm{d}x$ denotes the spatial mean of the pressure over the evaluation subdomain. This zero-mean adjustment for the pressure field ensures a rigorous comparison by explicitly filtering out the arbitrary constant associated with the pure Dirichlet boundary conditions in Setup B.

To systematically control the mesh resolution across varying obstacle densities, we introduce a dimensionless {grid factor}, $N_g$. This factor dictates the local resolution within each microscopic unit cell, enforcing the global maximum mesh size to scale as $h_{\max} \approx \epsilon / N_g = (m \cdot N_g)^{-1}$, where $\epsilon = 1/m$ is the characteristic pore size.

As demonstrated in Validation 1 (Table \ref{tab:baseline_convergence}), for a fixed obstacle density ($m=4$), successive mesh refinements yield the optimal asymptotic convergence rates regardless of the driving boundary conditions. The relative pressure error decays at $\mathcal{O}(h^2)$, while the velocity $L^2$ and $H^1$ errors decay at $\mathcal{O}(h^3)$ and $\mathcal{O}(h^2)$ respectively. This confirms the baseline consistency of the spatial discretization when the macroscopic structure is invariant.

\begin{table}[htbp]
    \centering
    \caption{Validation 1: Baseline convergence with a fixed obstacle density ($m=4$). Optimal Taylor-Hood convergence rates are achieved under both pressure-driven and velocity-driven boundary conditions as $h \to 0$.}
    \label{tab:baseline_convergence}
    \renewcommand{\arraystretch}{1.2}
    \resizebox{\textwidth}{!}{
        \begin{tabular}{ccccccc}
            \toprule
            \multicolumn{7}{c}{\textbf{Setup A: Pressure-Driven Boundary Conditions}}                                                                                     \\
            \midrule
            $h_{\max}$ & Rel. $p$ ($L^2$) & Order ($p$) & Rel. $\mathbf{u}$ ($L^2$) & Order ($\mathbf{u}_{L^2}$) & Rel. $\mathbf{u}$ ($H^1$) & Order ($\mathbf{u}_{H^1}$) \\
            \midrule
            0.0786     & 1.90e-02         & --          & 3.55e-02                  & --                         & 1.70e-01                  & --                         \\
            0.0549     & 7.85e-03         & 2.46        & 1.22e-02                  & 2.98                       & 9.81e-02                  & 1.53                       \\
            0.0426     & 4.71e-03         & 2.02        & 6.42e-03                  & 2.53                       & 6.56e-02                  & 1.58                       \\
            0.0338     & 3.25e-03         & 1.60        & 3.89e-03                  & 2.16                       & 4.75e-02                  & 1.40                       \\
            0.0284     & 2.17e-03         & 2.31        & 2.27e-03                  & 3.10                       & 3.40e-02                  & 1.91                       \\
            0.0247     & 1.55e-03         & 2.41        & 1.42e-03                  & 3.35                       & 2.52e-02                  & 2.14                       \\
            0.0218     & 1.18e-03         & 2.18        & 9.83e-04                  & 2.94                       & 1.99e-02                  & 1.89                       \\
            \midrule
            \multicolumn{7}{c}{\textbf{Setup B: Velocity-Driven Pipe Flow Boundary Conditions}}                                                                           \\
            \midrule
            0.0786     & 1.90e-02         & --          & 3.55e-02                  & --                         & 1.70e-01                  & --                         \\
            0.0549     & 8.48e-03         & 2.25        & 1.19e-02                  & 3.05                       & 9.83e-02                  & 1.53                       \\
            0.0426     & 5.24e-03         & 1.89        & 6.15e-03                  & 2.59                       & 6.57e-02                  & 1.58                       \\
            0.0338     & 3.50e-03         & 1.74        & 3.74e-03                  & 2.15                       & 4.75e-02                  & 1.40                       \\
            0.0284     & 2.29e-03         & 2.44        & 2.20e-03                  & 3.06                       & 3.40e-02                  & 1.92                       \\
            0.0247     & 1.60e-03         & 2.55        & 1.38e-03                  & 3.34                       & 2.52e-02                  & 2.15                       \\
            0.0218     & 1.21e-03         & 2.25        & 9.57e-04                  & 2.91                       & 1.99e-02                  & 1.89                       \\
            \bottomrule
        \end{tabular}
    }
\end{table}

\subsubsection{Amplification Penalty and Scaling Analysis}

The pronounced degradation of the convergence emerges in Validation 2 (Tables \ref{tab:error_amplification} and \ref{tab:error_amplification_orders}). Here, we fix the local mesh resolution within each pore space ($N_g=6$) and increase the obstacle density $m$. This strictly forces the global maximum mesh size to scale inversely with the density ($h_{\max} \propto m^{-1}$).

To prove that the observed loss of optimal convergence is not a numerical artifact but a fundamental mathematical manifestation of the multiscale saddle-point system, we analyze the error in three steps, mapping our theoretical bounds directly to the empirical data in Table \ref{tab:error_amplification}.

\paragraph{Step 1: Absolute Scaling of the Physical Fields}
Before analyzing the errors, we must establish the physical baselines driven by different boundary conditions. Crucially, we designed both setups to maintain equivalent macroscopic scalings, isolating the boundary formulation as the sole independent variable.
\begin{itemize}
    \item \textbf{Setup A (Pressure-Driven):} By imposing a macroscopic pressure drop that explicitly scales with the characteristic hydraulic resistance ($\Delta p = m^2$), we artificially drive a macroscopically stable flow rate. According to Darcy's law, the absolute velocity remains strictly $\mathcal{O}(1)$, while the reference pressure scales as $\mathcal{O}(m^2)$. This is verified in Table \ref{tab:error_amplification} (Setup A), where $\|\mathbf{u}_{\mathrm{ref}}\|_{L^2}$ remains remarkably stable at $\approx 8.0 \times 10^{-3}$ across all densities up to $m=128$, and $\|p_{\mathrm{ref}}\|_{L^2}$ scales strictly quadratically, reaching $2.00 \times 10^3$ at $m=128$.
    \item \textbf{Setup B (Velocity-Driven):} Forcing a constant macroscopic flow rate ($\|\mathbf{u}_{\mathrm{ref}}\|_{L^2} \approx 1.0$) through densely packed pillars similarly reflects Darcy's law. The required absolute pressure must identically grow quadratically: $\|p_{\mathrm{ref}}\|_{L^2} = \mathcal{O}(m^2)$. Table \ref{tab:error_amplification} confirms this: as $m$ doubles iteratively up to 128, $\|p_{\mathrm{ref}}\|_{L^2}$ quadruples at each step, culminating at $2.44 \times 10^5$.
\end{itemize}

\paragraph{Step 2: Invariance of the Relative Velocity Error}
In both physical regimes, the spatial derivatives of the highly localized velocity field scale strictly with the microscopic length ($\epsilon \propto m^{-1}$):
\begin{equation} \label{eq:vel_scaling}
    |\mathbf{u}|_{H^k} \sim \epsilon^{-k} \|\mathbf{u}\|_{L^2} \sim m^k \|\mathbf{u}\|_{L^2}.
\end{equation}
Because the local resolution within each pore space is fixed, the global mesh size satisfies $h \propto m^{-1}$. By standard interpolation theory for $P_2$ elements, the absolute best-approximation errors for the velocity field in the $L^2$- and $H^1$-norms are bounded respectively by:
\begin{equation} \label{eq:vel_interp_abs}
    \begin{aligned}
        \|\mathbf{u} - \mathbf{u}_h\|_{L^2} & \le C h^3 |\mathbf{u}|_{H^3} \sim (m^{-1})^3 \left( m^3 \|\mathbf{u}\|_{L^2} \right) \sim \|\mathbf{u}\|_{L^2},                        \\
        |\mathbf{u} - \mathbf{u}_h|_{H^1}   & \le C h^2 |\mathbf{u}|_{H^3} \sim (m^{-1})^2 \left( m^3 \|\mathbf{u}\|_{L^2} \right) = m \|\mathbf{u}\|_{L^2} \sim |\mathbf{u}|_{H^1}.
    \end{aligned}
\end{equation}
Dividing these absolute errors by their corresponding norms mathematically demonstrates that the relative velocity errors are fundamentally invariant with respect to the obstacle density $m$:
\begin{equation} \label{eq:vel_rel_error}
    \frac{\|\mathbf{u} - \mathbf{u}_h\|_{L^2}}{\|\mathbf{u}\|_{L^2}} = \mathcal{O}(1), \quad \frac{|\mathbf{u} - \mathbf{u}_h|_{H^1}}{|\mathbf{u}|_{H^1}} = \mathcal{O}(1).
\end{equation}
This theoretical derivation is corroborated by Table \ref{tab:error_amplification}, where the relative velocity errors (Rel. $\mathbf{u}$) stabilize around $\sim 1.2\%$ for $L^2$ and $\sim 10\%$ for $H^1$ across all densities and both setups. Consequently, Table \ref{tab:error_amplification_orders} shows that their convergence orders stagnate at $0$.

\paragraph{Step 3: The Asymmetric inf-sup Pollution on Pressure}
Crucially, the pressure approximation is not independent; it is contaminated by the absolute velocity error through the residual duality pairing. Bounding this yields the standard {a priori} estimate:
\begin{equation} \label{eq:p_apriori}
    \| p - p_h \|_{L^2} \le C \underbrace{\beta_h^{-1}}_{\mathcal{O}(m)} \times \left( \|\nabla(\mathbf{u} - \mathbf{u}_h)\|_{L^2} + \inf_{q_h} \| p - q_h \|_{L^2} \right).
\end{equation}
In Setup B, enforcing a macroscopic parabolic profile triggers a severe boundary layer resonance. We observe that the absolute velocity error subsequently diverges: $\|\nabla(\mathbf{u} - \mathbf{u}_h)\|_{L^2} = \mathcal{O}(m)$ (seen in Table \ref{tab:error_amplification}, jumping consistently from $2.76$ at $m=4$ to an extreme $83.8$ at $m=128$). Amplified by the degraded inf-sup constant $\beta_h^{-1} = \mathcal{O}(m)$, this massive microscopic error causes the absolute pressure error to grow quadratically. Because the reference pressure inherently grows as $\mathcal{O}(m^2)$ (Darcy's law), the relative pressure error completely stagnates:
\begin{equation} \label{eq:p_stagnation}
    \frac{\| p - p_h \|_{L^2}}{\|p_{\mathrm{ref}}\|_{L^2}} \sim \frac{\mathcal{O}(m^2)}{\mathcal{O}(m^2)} = \mathcal{O}(1).
\end{equation}
This theoretical deadlock is corroborated by Table \ref{tab:error_amplification_orders}, where the pressure convergence order collapses entirely to $0.03$.

Surprisingly, in Setup A, the relative pressure error successfully evades this stagnation. We hypothesize that the natural Neumann condition induces a compatible macroscopic field that allows for a weak homogenization cancellation, effectively suppressing the velocity pollution term prior to the Cauchy-Schwarz bounding. However, establishing a rigorous two-scale convergence argument to fully capture this phenomenon falls beyond the primary scope of this paper, and we leave its formal mathematical resolution as an open question.

From a practical standpoint, the flow in realistic DLD devices is intrinsically driven by a fixed pressure drop. Setup B primarily serves as an idealized mathematical stress-test to demonstrate the extreme sensitivity of multiscale Stokes systems to macroscopic constraints. Therefore, we purposefully focus our discussion on the continuous stability properties and physical implications under the natural, pressure-driven regime.

Under pressure-driven (Neumann) conditions, the relative pressure error retains first-order convergence because the macroscopic field is compatible with the natural boundary formulation. Under velocity-driven (Dirichlet) conditions, the inf-sup penalty amplifies the absolute velocity error by $\mathcal{O}(m)$, matching the Darcy pressure growth and driving the relative pressure convergence rate to $0.03$. The penalty is a structural incompatibility between macroscopic Dirichlet constraints and multiscale Stokes systems, not a discretization defect.
\begin{table}[htbp]
    \centering
    \caption{Validation 2: Error amplification with fixed local resolution ($N_g=6$). As obstacle density $m$ increases, the absolute physical norms follow consistent Darcy scalings ($\|p\| \sim m^2$) in both setups. The relative velocity errors remain stationary ($\approx 1.2\% - 1.4\%$), demonstrating that the spatial discretization capacity is invariant. In contrast, the relative pressure errors suffer severe degradation under Dirichlet constraints due to the inf-sup penalty.}
    \label{tab:error_amplification}
    \renewcommand{\arraystretch}{1.2}
    \resizebox{\textwidth}{!}{
        \begin{tabular}{cc|cc|ccc|cc}
            \toprule
            \multicolumn{9}{c}{\textbf{Setup A: Pressure-Driven (Neumann, Pressure scaled by $m^2$)}}                                                                                                                                                             \\
            \midrule
            $m$ & $h_{\max}$ & $\|p_{\mathrm{ref}}\|_{L^2}$ & \textbf{Rel. $p$ ($L^2$)} & $\|\mathbf{u}_{\mathrm{ref}}\|_{L^2}$ & Abs. $\mathbf{u}$ ($L^2$) & \textbf{Rel. $\mathbf{u}$ ($L^2$)} & Abs. $\mathbf{u}$ ($H^1$) & \textbf{Rel. $\mathbf{u}$ ($H^1$)} \\
            \midrule
            2   & 0.1074     & 4.97e-01                     & \textbf{1.41e-02}         & 5.75e-03                              & 8.21e-05                  & \textbf{1.43e-02}                  & 8.82e-03                  & \textbf{1.07e-01}                  \\
            4   & 0.0549     & 1.91e+00                     & \textbf{7.85e-03}         & 7.08e-03                              & 8.64e-05                  & \textbf{1.22e-02}                  & 1.85e-02                  & \textbf{9.81e-02}                  \\
            8   & 0.0275     & 7.60e+00                     & \textbf{4.34e-03}         & 7.60e-03                              & 9.87e-05                  & \textbf{1.30e-02}                  & 4.09e-02                  & \textbf{1.03e-01}                  \\
            12  & 0.0184     & 1.72e+01                     & \textbf{2.97e-03}         & 7.79e-03                              & 1.02e-04                  & \textbf{1.31e-02}                  & 6.23e-02                  & \textbf{1.02e-01}                  \\
            16  & 0.0137     & 3.08e+01                     & \textbf{2.18e-03}         & 7.87e-03                              & 9.82e-05                  & \textbf{1.25e-02}                  & 8.10e-02                  & \textbf{9.91e-02}                  \\
            32  & 0.0069     & 1.24e+02                     & \textbf{1.13e-03}         & 8.01e-03                              & 1.06e-04                  & \textbf{1.32e-02}                  & 1.71e-01                  & \textbf{1.03e-01}                  \\
            64  & 0.0034     & 4.98e+02                     & \textbf{5.69e-04}         & 8.07e-03                              & 1.05e-04                  & \textbf{1.31e-02}                  & 3.44e-01                  & \textbf{1.03e-01}                  \\
            128 & 0.0017     & 2.00e+03                     & \textbf{2.98e-04}         & 8.11e-03                              & 1.05e-04                  & \textbf{1.30e-02}                  & 6.85e-01                  & \textbf{1.02e-01}                  \\
            \midrule
            \multicolumn{9}{c}{\textbf{Setup B: Velocity-Driven Pipe Flow (Fixed Macroscopic Profile $\mathbf{u} \sim 1$)}}                                                                                                                                       \\
            \midrule
            $m$ & $h_{\max}$ & $\|p_{\mathrm{ref}}\|_{L^2}$ & \textbf{Rel. $p$ ($L^2$)} & $\|\mathbf{u}_{\mathrm{ref}}\|_{L^2}$ & Abs. $\mathbf{u}$ ($L^2$) & \textbf{Rel. $\mathbf{u}$ ($L^2$)} & Abs. $\mathbf{u}$ ($H^1$) & \textbf{Rel. $\mathbf{u}$ ($H^1$)} \\
            \midrule
            2   & 0.1074     & 9.60e+01                     & \textbf{1.45e-02}         & 1.10e+00                              & 1.53e-02                  & \textbf{1.39e-02}                  & 1.69e+00                  & \textbf{1.07e-01}                  \\
            4   & 0.0549     & 2.84e+02                     & \textbf{8.48e-03}         & 1.05e+00                              & 1.25e-02                  & \textbf{1.19e-02}                  & 2.76e+00                  & \textbf{9.83e-02}                  \\
            8   & 0.0275     & 1.02e+03                     & \textbf{5.49e-03}         & 1.02e+00                              & 1.29e-02                  & \textbf{1.27e-02}                  & 5.48e+00                  & \textbf{1.03e-01}                  \\
            12  & 0.0184     & 2.23e+03                     & \textbf{4.41e-03}         & 1.01e+00                              & 1.29e-02                  & \textbf{1.28e-02}                  & 8.07e+00                  & \textbf{1.03e-01}                  \\
            16  & 0.0137     & 3.92e+03                     & \textbf{3.62e-03}         & 1.00e+00                              & 1.22e-02                  & \textbf{1.22e-02}                  & 1.03e+01                  & \textbf{9.92e-02}                  \\
            32  & 0.0069     & 1.54e+04                     & \textbf{3.48e-03}         & 9.96e-01                              & 1.29e-02                  & \textbf{1.29e-02}                  & 2.13e+01                  & \textbf{1.03e-01}                  \\
            64  & 0.0034     & 6.13e+04                     & \textbf{3.41e-03}         & 9.92e-01                              & 1.27e-02                  & \textbf{1.28e-02}                  & 4.23e+01                  & \textbf{1.03e-01}                  \\
            128 & 0.0017     & 2.44e+05                     & \textbf{3.34e-03}         & 9.91e-01                              & 1.26e-02                  & \textbf{1.27e-02}                  & 8.38e+01                  & \textbf{1.03e-01}                  \\
            \bottomrule
        \end{tabular}
    }
\end{table}

\begin{table}[htbp]
    \centering
    \caption{Empirical convergence rates for the relative errors calculated from Validation 2. As dictated by \eqref{eq:vel_interp_abs}, the relative velocity convergence strictly stagnates at zero ($\mathcal{O}(1)$) across all resolutions up to $m=128$. Consequently, under Setup A, the relative pressure error elegantly maintains optimal 1st-order convergence. Under Setup B, the extreme absolute velocity error, amplified by the inf-sup penalty, matches the physical Darcy pressure explosion, driving the pressure convergence rate to a complete standstill ($0.03$).}
    \label{tab:error_amplification_orders}
    \renewcommand{\arraystretch}{1.2}
    \begin{tabular}{cc | ccc | ccc}
        \toprule
        \multirow{2}{*}{$m$} & \multirow{2}{*}{$h_{\max}$} & \multicolumn{3}{c|}{\textbf{Setup A: Pressure-Driven}} & \multicolumn{3}{c}{\textbf{Setup B: Velocity-Driven}}                                                                                                      \\
        \cmidrule{3-8}
                             &                             & Order ($p$)                                            & Order ($\mathbf{u}_{L^2}$)                            & Order ($\mathbf{u}_{H^1}$) & Order ($p$) & Order ($\mathbf{u}_{L^2}$) & Order ($\mathbf{u}_{H^1}$) \\
        \midrule
        2                    & 0.1074                      & --                                                     & --                                                    & --                         & --          & --                         & --                         \\
        4                    & 0.0549                      & 0.87                                                   & \phantom{-}0.23                                       & \phantom{-}0.12            & 0.80        & \phantom{-}0.23            & \phantom{-}0.12            \\
        8                    & 0.0275                      & 0.85                                                   & -0.08                                                 & -0.06                      & 0.62        & -0.09                      & -0.06                      \\
        12                   & 0.0184                      & 0.94                                                   & -0.01                                                 & \phantom{-}0.00            & 0.55        & -0.02                      & \phantom{-}0.00            \\
        16                   & 0.0137                      & 1.06                                                   & \phantom{-}0.16                                       & \phantom{-}0.11            & 0.67        & \phantom{-}0.16            & \phantom{-}0.11            \\
        32                   & 0.0069                      & 0.94                                                   & -0.08                                                 & -0.05                      & 0.05        & -0.08                      & -0.06                      \\
        64                   & 0.0034                      & 0.99                                                   & \phantom{-}0.01                                       & \phantom{-}0.00            & 0.03        & \phantom{-}0.01            & \phantom{-}0.00            \\
        128                  & 0.0017                      & 0.94                                                   & \phantom{-}0.01                                       & \phantom{-}0.01            & 0.03        & \phantom{-}0.01            & \phantom{-}0.01            \\
        \bottomrule
    \end{tabular}
\end{table}
\subsection{Buffer Zones Do Not Cure the Pressure Stagnation}
\label{subsec:buffer_configurations}

In practical multiscale fluid simulations, appending upstream and downstream buffer zones is a standard computational practice to mitigate boundary-induced numerical artifacts. Depending on the discipline, two distinct buffer configurations are commonly employed:
\begin{enumerate}
    \item \textbf{Configuration 1: Patterned Buffer (Classical Oversampling).} Widely used in homogenization and multiscale finite element methods \cite{Hou1997}, this involves extending the domain using the identical periodic micro-pillar array. The premise is that high-frequency boundary layers decay exponentially within this patterned buffer.
    \item \textbf{Configuration 2: Un-patterned Buffer (Realistic Empty Channel).} Standard in engineering CFD, this involves appending obstacle-free straight channels. The intuitive expectation is that the flow will naturally transition from a fully developed Poiseuille profile into the complex array.
\end{enumerate}
As illustrated in Fig.~\ref{fig:buffer_configurations}, the computational domain is extended by a buffer zone of length $L_b = 0.5$ at both ends.
\begin{figure}[htbp]
    \centering
    \begin{subfigure}[b]{0.8\textwidth}
        \centering
        \includegraphics[width=\textwidth]{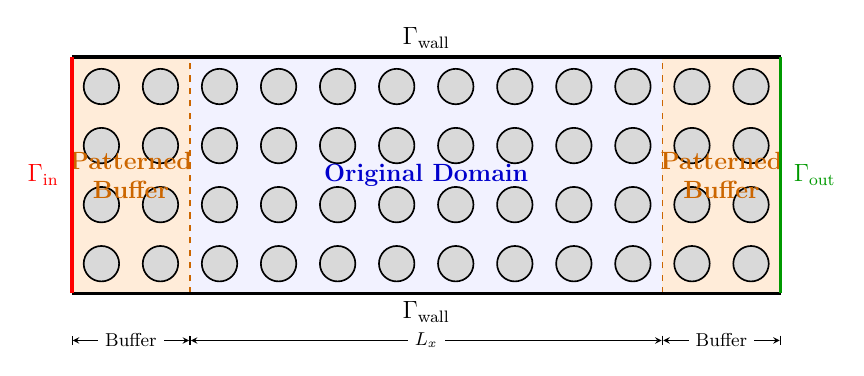}
        \caption{Configuration 1: Patterned Buffer setup. The central domain $(0, L_x) \times (0, L_y)$ is extended by periodic buffer zones of width $L_b = 0.5$ at both ends. For $\epsilon = 1/m$, this results in a total simulation domain of $3m \times m$ unit cells.}
        \label{fig:patterned_buffer}
    \end{subfigure}

    \vspace{0.5cm} 

    \begin{subfigure}[b]{0.8\textwidth}
        \centering
        \includegraphics[width=\textwidth]{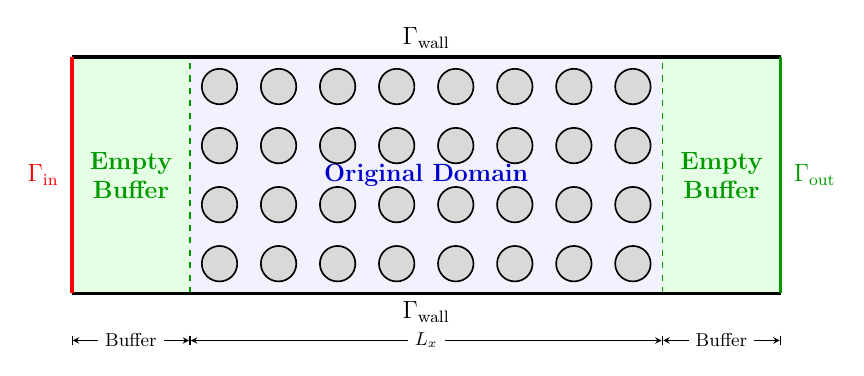}
        \caption{Configuration 2: Un-patterned Buffer setup. The central domain containing $2m \times m$ unit cells is augmented with obstacle-free straight channels of width $L_b = 0.5$ at the inlet and outlet to allow for flow development.}
        \label{fig:empty_buffer}
    \end{subfigure}

    \caption{Schematic of the extended computational domains with different buffer configurations. The core geometry remains a DLD micro-pillar array with unit cell size $\epsilon = 1/m$.}
    \label{fig:buffer_configurations}
\end{figure}

To evaluate whether these conventional remedies can mitigate the error degradation observed under macroscopic Dirichlet constraints, we augmented Setup B by appending buffer regions of length $L_b = 0.5$ to both ends of the domain. We extract and strictly evaluate the relative errors within the central interior array ($\Omega_{\mathrm{inner}}$).

\begin{table}[htbp]
    \centering
    \caption{Error amplification and empirical convergence rates under two distinct extended buffer configurations ($L_b = 0.5$). Regardless of whether the buffer is patterned (oversampling) or un-patterned (empty channel), the relative pressure error in the interior domain severely stagnates. Both convergence rates collapse toward zero as the density $m$ increases.}
    \label{tab:extended_channel_comparison}
    \renewcommand{\arraystretch}{1.2}
    \resizebox{\textwidth}{!}{
        \begin{tabular}{cc | cc | cc | cc | cc}
            \toprule
            \multicolumn{2}{c|}{} & \multicolumn{4}{c|}{\textbf{Config 1: Patterned Buffer (Oversampling)}} & \multicolumn{4}{c}{\textbf{Config 2: Un-patterned Buffer (Empty Channel)}}                                                                                                                                                                                                 \\
            \cmidrule(lr){3-6} \cmidrule(lr){7-10}
            $m$                   & $h_{\max}$                                                              & \textbf{Rel. $p$ ($L^2$)}                                                  & Order ($p$)   & \textbf{Rel. $\mathbf{u}$ ($H^1$)} & Order ($\mathbf{u}_{H^1}$) & \textbf{Rel. $p$ ($L^2$)} & Order ($p$)   & \textbf{Rel. $\mathbf{u}$ ($H^1$)} & Order ($\mathbf{u}_{H^1}$) \\
            \midrule
            2                     & 0.107 -- 0.110                                                          & 1.47e-02                                                                   & --            & 1.07e-01                           & --                         & 1.58e-02                  & --            & 1.07e-01                           & --                         \\
            4                     & 0.055                                                                   & 8.56e-03                                                                   & 0.80          & 9.93e-02                           & \phantom{-}0.11            & 9.24e-03                  & 0.78          & 1.06e-01                           & \phantom{-}0.01            \\
            8                     & 0.027                                                                   & 5.47e-03                                                                   & 0.65          & 1.02e-01                           & -0.04                      & 5.52e-03                  & 0.74          & 1.04e-01                           & \phantom{-}0.03            \\
            12                    & 0.018                                                                   & 4.39e-03                                                                   & 0.54          & 1.03e-01                           & -0.02                      & 4.45e-03                  & 0.54          & 1.03e-01                           & \phantom{-}0.02            \\
            16                    & 0.014                                                                   & 3.63e-03                                                                   & 0.66          & 9.98e-02                           & \phantom{-}0.11            & 3.82e-03                  & 0.66          & 1.02e-01                           & \phantom{-}0.04            \\
            32                    & 0.007                                                                   & 3.47e-03                                                                   & \textbf{0.06} & 1.04e-01                           & -0.06                      & 3.45e-03                  & \textbf{0.15} & 1.03e-01                           & -0.01                      \\
            \bottomrule
        \end{tabular}
    }
\end{table}

As documented in Table \ref{tab:extended_channel_comparison}, while both buffering strategies successfully stabilize the velocity approximation, the relative pressure error exhibits a persistent zero-order stagnation, refusing to converge beyond $\sim 3.4 \times 10^{-3}$.

These empirical findings serve as a necessary cautionary note for the simulation of highly heterogeneous systems. The fact that the pressure error remains locked at $\mathcal{O}(1)$ despite sufficient physical buffering presents a significant analytical challenge. The precise mathematical mechanisms driving this persistent boundary-induced stagnation, as well as the design of universally compatible boundary treatments for multiscale Stokes formulations, remain unclear within our current theoretical framework and are left as open problems for future investigation.

Neither patterned nor un-patterned buffer zones cure the pressure stagnation under macroscopic Dirichlet constraints. The error floor at $\sim 3.4 \times 10^{-3}$ is a consequence of the inf-sup penalty, not a boundary artifact that physical buffering can remove.

\subsection{Algorithmic Implications: Adaptive AL Scaling}\label{subsec:al_scaling}

Consider the discrete saddle-point system arising from a stable mixed finite element discretization (e.g., Taylor-Hood $P_2-P_1$ elements) of the Stokes problem on the pillar-array domain $\Omega_m$. The conditioning of the standard discrete pressure Schur complement $S = B A^{-1} B^T$ is strictly bounded by the discrete inf-sup constant $\beta_h$, yielding $\kappa(M_p^{-1} S) \propto \beta_h^{-2}$. According to our main theorems, since the continuous constant degrades as $\beta = \Theta(m^{-1})$, any conforming discrete space inherently suffers from $\beta_h \le \mathcal{O}(m^{-1})$, leading to $\kappa(M_p^{-1}S) \ge \mathcal{O}(m^2)$. This severe ill-conditioning causes the stagnation of standard Krylov subspace methods.

To restore the robustness of the outer iterations, the Augmented Lagrangian (AL) method introduces a grad-div stabilization term $\gamma B^T \operatorname{diag}(M_p)^{-1} B \mathbf{u}_h$ into the momentum equation. The modified Schur complement $S_{\gamma}$ exhibits a significantly improved condition number bound \cite{Benzi2006}:
\begin{equation}\label{eq:al_condition_bound}
    \kappa(M_p^{-1}S_{\gamma}) \le C \left( 1 + \frac{1}{\gamma \mu^{-1} \beta_h^2} \right),
\end{equation}
where $C$ is a constant independent of the mesh size $h$ and the geometry.

In standard literature, the penalty parameter $\gamma$ is often chosen heuristically. However, leveraging our pre-asymptotic bound $\beta_h \propto m^{-1}$, we can now rigorously derive the optimal strategy. To ensure that the outer Krylov solver remains completely robust and independent of the pillar density $m$ (i.e., enforcing $\kappa(M_p^{-1}S_{\gamma}) = \mathcal{O}(1)$), the penalty parameter must be adaptively scaled according to:
\begin{equation}\label{eq:gamma_scaling}
    \gamma = \gamma_0 \cdot m^2 \propto \epsilon^{-2},
\end{equation}
where $\gamma_0 > 0$ is a baseline constant of $\mathcal{O}(1)$. This strategy directly translates our continuous PDE analysis into an actionable, parameter-free stabilization strategy for microfluidic simulations.

\subsection{AL Scaling Achieves Density-Independent Convergence on Square Arrays}\label{subsec:validation_al_square}

Before extending our methodology to complex microfluidic designs, we first validate the proposed parameter-free Augmented Lagrangian (AL) preconditioner on a standard benchmark: a regular square micro-pillar array ($\delta = 0$). We maintain a constant solid packing ratio $\kappa = 0.25$ (i.e., the pillar radius is fixed at $0.25$ times the characteristic unit cell diameter) and systematically refine the characteristic mesh density $m$ from $2$ to $64$.

To rigorously evaluate the algorithmic scalability and preclude any false convergence artifacts (such as volumetric locking inherent to high penalty parameters in $P_2-P_1$ elements), the FGMRES solver is driven by the unpreconditioned physical residual, enforced with a stringent absolute tolerance of $10^{-10}$. Moreover, to isolate the macroscopic conditioning of the pressure Schur complement from the specific choice of inner algebraic preconditioners, we employ a robust sparse direct solver (e.g., UMFPACK) for all inner velocity block inversions. The development of optimal, scalable multigrid preconditioners for the highly ill-conditioned AL velocity block in these densely packed geometries is a distinct and substantial challenge \cite{Benzi2006, Farrell2019}, which falls outside the scope of this theoretical benchmark and is deferred to our future algorithmic research.

\begin{figure}[htbp]
    \centering
    \begin{subfigure}[b]{0.48\textwidth}
        \centering
        \includegraphics[width=\textwidth]{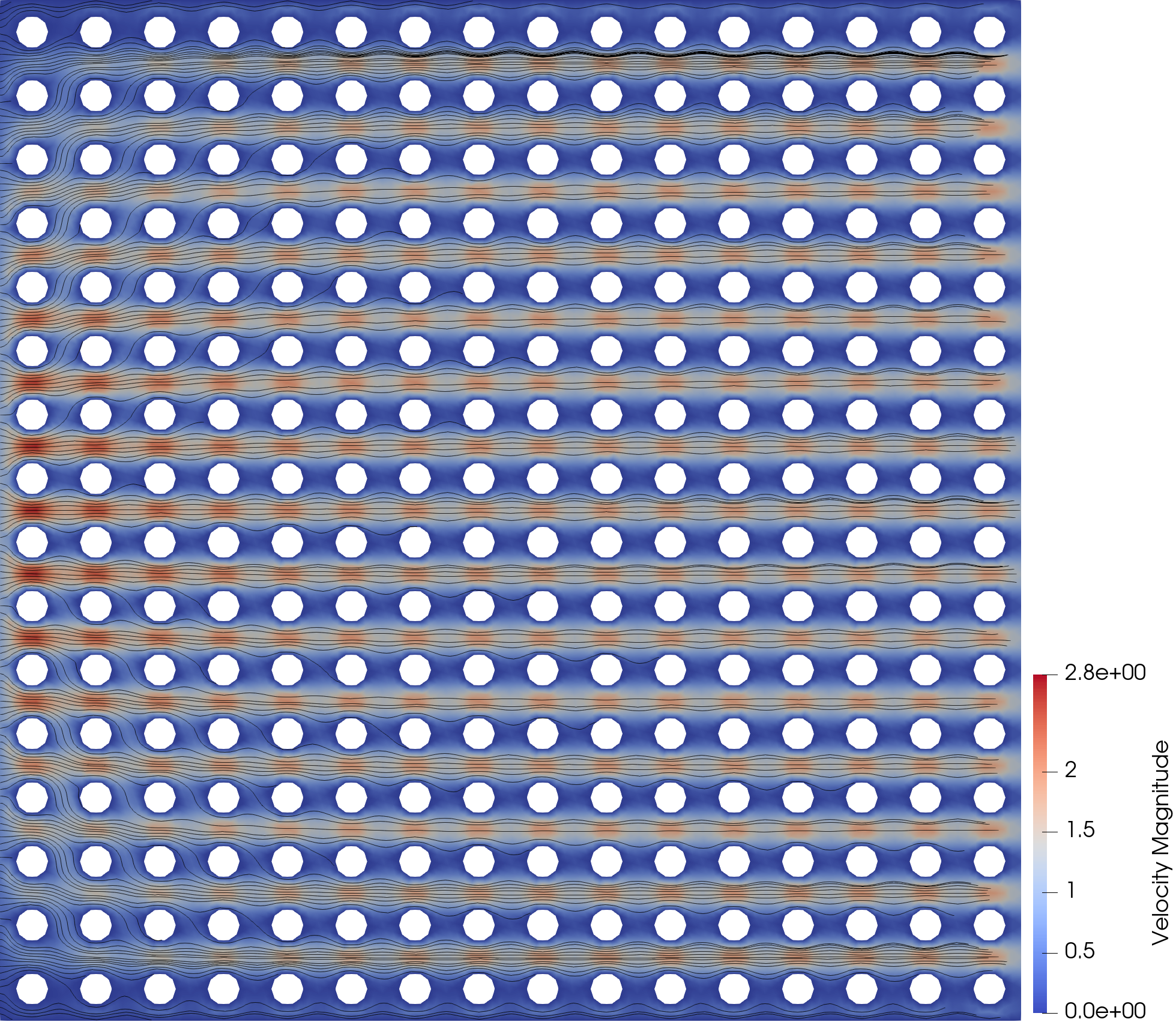}
        \caption{$m = 16$ (Total DoFs: 109,983)}
        \label{fig:square_m16}
    \end{subfigure}
    \hfill 
    \begin{subfigure}[b]{0.48\textwidth}
        \centering
        \includegraphics[width=\textwidth]{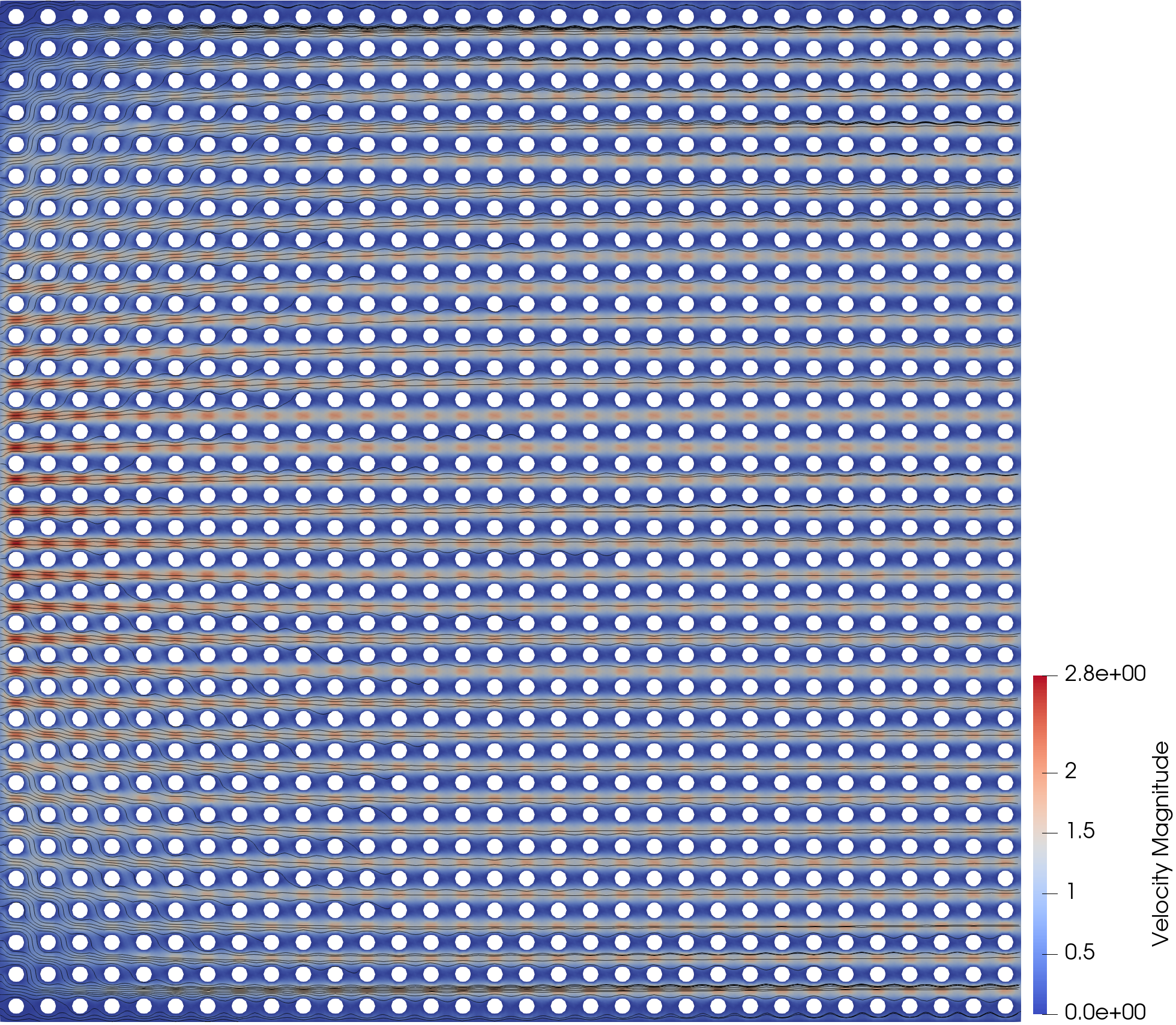}
        \caption{$m = 32$ (Total DoFs: 441,508)}
        \label{fig:square_m32}
    \end{subfigure}

    \caption{Velocity magnitude contours and streamlines of the steady Stokes flow through the square array ($\delta = 0, \kappa = 0.25$). The flow fields computed on the intermediate-density configuration (\subref{fig:square_m16}) and the high-density configuration (\subref{fig:square_m32}) share an identical colormap range. The results demonstrate that the highly scaled AL preconditioner accurately resolves the complex fluid tortuosity and local shear layers without exhibiting any numerical locking, even as the system approaches half a million degrees of freedom.}
    \label{fig:square_flow_comparison}
\end{figure}

\begin{table}[htbp]
    \centering
    \caption{Iteration count comparison for the standard square micro-pillar array ($\delta = 0, \kappa = 0.25$). The Scaled AL method effectively mitigates the ill-conditioning, achieving optimal algorithmic scalability as the degrees of freedom (DoFs) scale up to 1.85 million.}
    \label{tab:solver_iterations_square}
    \begin{tabular}{cccc}
        \toprule
        Density ($m \times m$) & Total DoFs & Std. Stokes Iters ($\gamma=0$) & AL-Stokes Iters ($\gamma = m^2$) \\
        \midrule
        $2 \times 2$           & 1,795      & 34                             & 25                               \\
        $4 \times 4$           & 7,183      & 53                             & 25                               \\
        $8 \times 8$           & 29,071     & 86                             & 25                               \\
        $16 \times 16$         & 116,872    & 154                            & 23                               \\
        $32 \times 32$         & 468,574    & 277                            & 22                               \\
        $64 \times 64$         & 1,852,843  & 437                            & 22                               \\
        \bottomrule
    \end{tabular}
\end{table}

As shown in Table \ref{tab:solver_iterations_square}, the standard formulation ($\gamma=0$) suffers algorithmic degradation, with iteration counts growing from 34 to 437 as the obstacle density increases. This deterioration is a direct algebraic consequence of the degrading inf-sup stability. The Scaled AL method neutralizes this saddle-point pathology. Dictated by our strategy ($\gamma = m^2$), the AL preconditioner remains bounded and even slightly decreases, stabilizing at 22 steps for a system approaching 1.85 million DoFs. This density-independent convergence establishes a baseline for simulating more complex microfluidic topologies.

The $\gamma = m^2$ scaling reduces FGMRES iterations from 437 to 22 at 1.85M DoFs and eliminates the $\mathcal{O}(m)$ growth observed in the standard formulation. Iteration counts are bounded independent of pillar density, confirming the algebraic implication of our sharp inf-sup bound.

\subsection{AL Robustness Holds Under Geometric Asymmetry ($\delta = 1/3$)}\label{subsec:validation_al_dld}

To definitively demonstrate the generalizability and practical engineering efficacy of our parameter-free stabilization strategy, we turn to a realistic Deterministic Lateral Displacement (DLD) microfluidic array. Unlike standard Cartesian grids, DLD arrays are characterized by a shifted Bravais lattice ($\delta > 0$), which breaks geometric symmetry and introduces complex, tortuous streamlines responsible for particle fractionation.

We construct a sequence of DLD pillar arrays with a characteristic row shift fraction $\delta = 1/3$ and maintain the solid geometric ratio $\kappa = 0.25$. The mesh density $m$ is scaled from $2$ up to an extreme case of $64$, culminating in a highly ill-conditioned saddle-point system with over 1.76 million degrees of freedom. Crucially, to preclude any spurious convergence artifacts (e.g., false volumetric locking) that can be induced by the highly scaled penalty parameter ($\gamma = m^2$) in the $P_2-P_1$ space, we strictly evaluate the unpreconditioned physical residual. The FGMRES solver is enforced to satisfy a stringent absolute tolerance of $10^{-10}$ alongside a relative tolerance of $10^{-8}$.
\begin{figure}[htbp]
    \centering
    \begin{subfigure}[b]{0.48\textwidth}
        \centering
        \includegraphics[width=\textwidth]{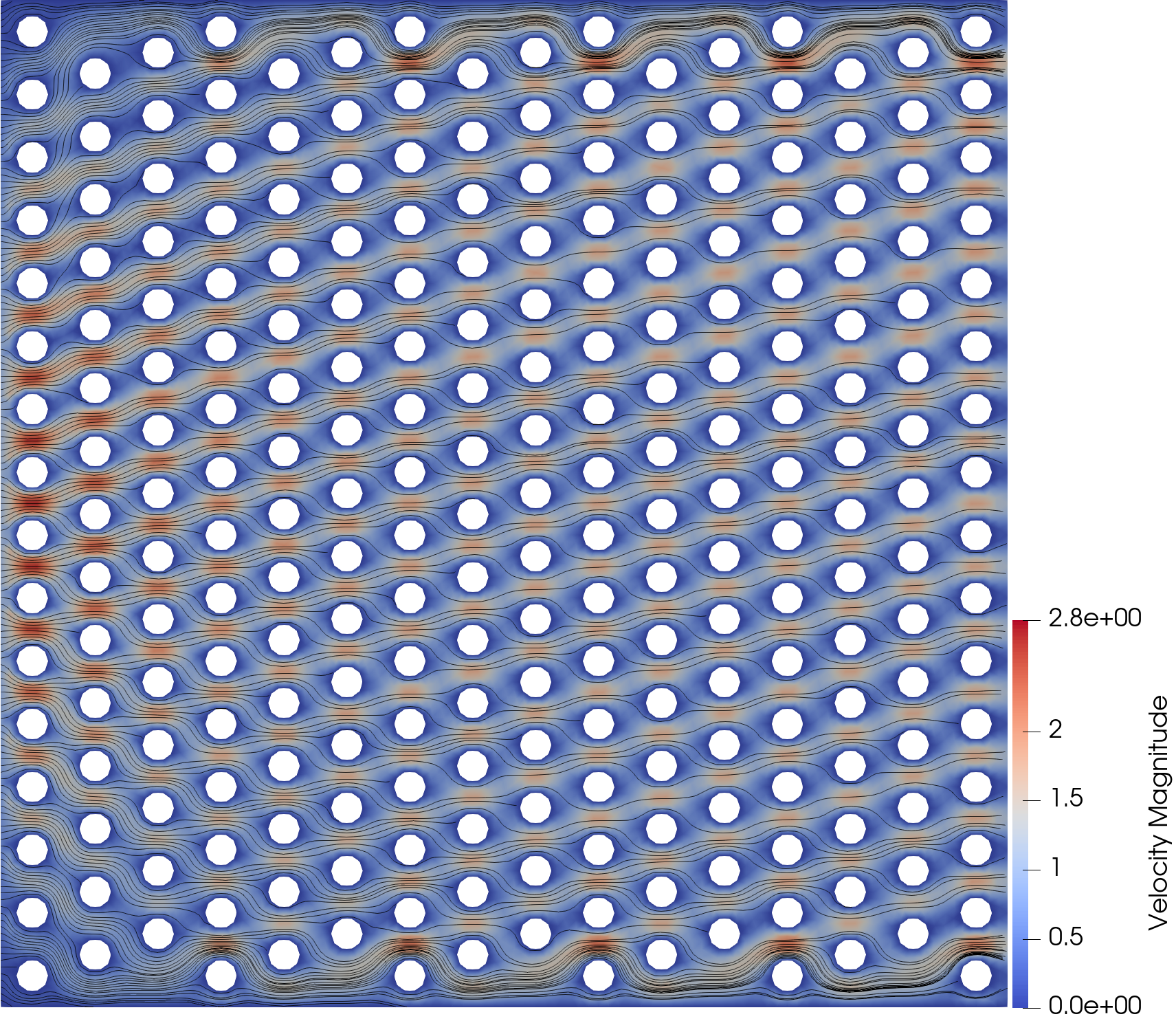}
        \caption{$m = 16$ (Total DoFs: 109,983)}
        \label{fig:dld_m16}
    \end{subfigure}
    \hfill
    \begin{subfigure}[b]{0.48\textwidth}
        \centering
        \includegraphics[width=\textwidth]{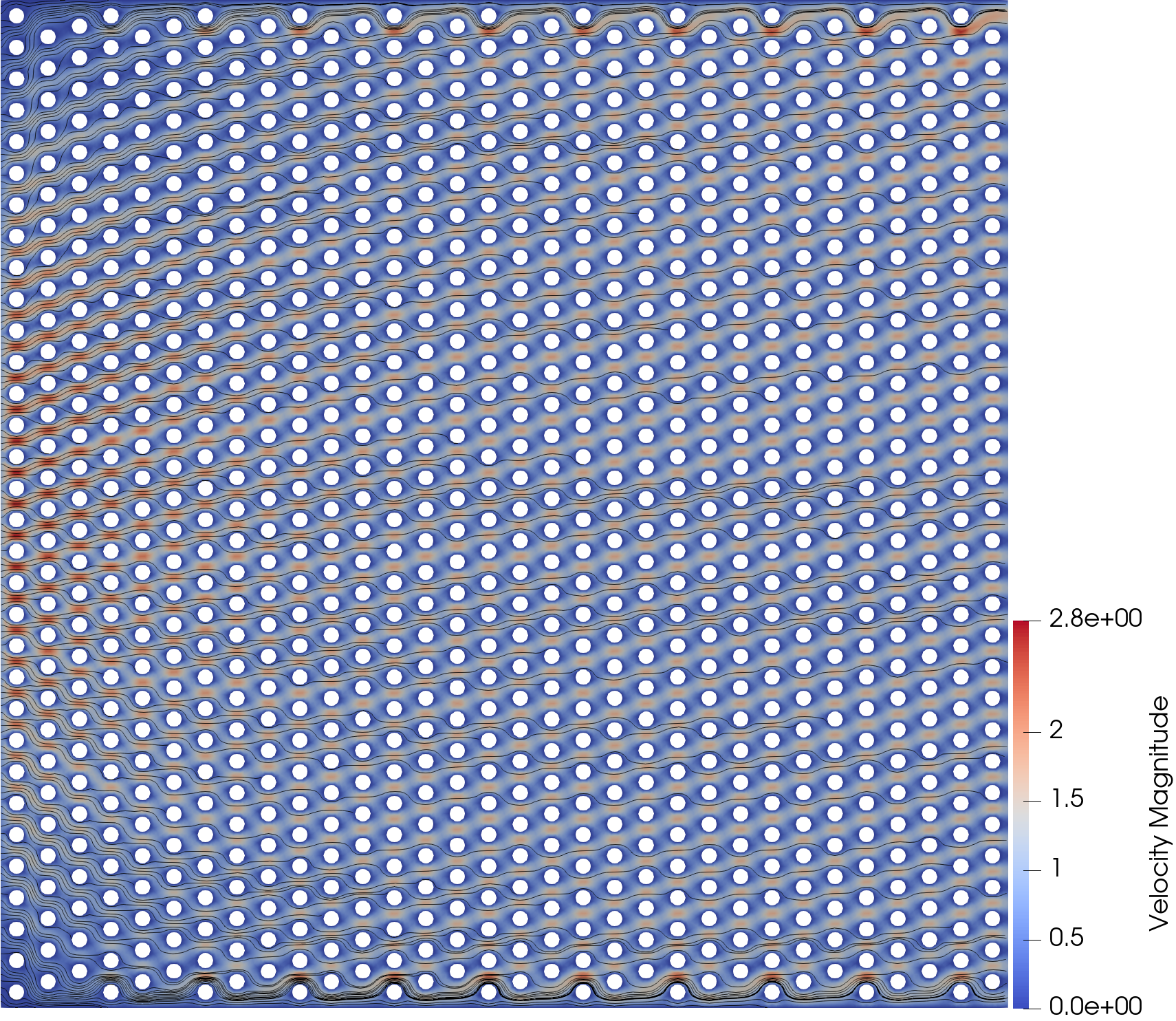}
        \caption{$m = 32$ (Total DoFs: 441,508)}
        \label{fig:dld_m32}
    \end{subfigure}
    \caption{Velocity magnitude contours and streamlines of the steady Stokes flow through the DLD array ($\delta = 1/3, \kappa = 0.25$). The flow fields computed on the intermediate-density configuration (\subref{fig:dld_m16}) and the high-density configuration (\subref{fig:dld_m32}) share an identical colormap range. The results demonstrate that the highly scaled AL preconditioner accurately resolves the complex fluid tortuosity and local shear layers without exhibiting any numerical locking, even as the system approaches half a million degrees of freedom.}
    \label{fig:dld_flow_comparison}
\end{figure}

\begin{table}[htbp]
    \centering
    \caption{Comparison of FGMRES outer iterations for a DLD array with a row shift fraction $\delta = 1/3$. Even with broken symmetry, highly tortuous flow paths, and stringent convergence criteria ($10^{-10}$ absolute tolerance), the AL method exhibits robust algorithmic scalability.}
    \label{tab:solver_iterations_dld}
    \begin{tabular}{cccc} 
        \toprule
        Density ($m \times m$) & Total DoFs & Std. Stokes Iters ($\gamma=0$) & AL-Stokes Iters ($\gamma = m^2$) \\
        \midrule
        $2 \times 2$           & 1,755      & 33                             & 24                               \\
        $4 \times 4$           & 6,860      & 57                             & 28                               \\
        $8 \times 8$           & 27,467     & 95                             & 27                               \\
        $16 \times 16$         & 109,983    & 175                            & 27                               \\
        $32 \times 32$         & 441,508    & 324                            & 26                               \\
        $64 \times 64$         & 1,766,167  & 687                            & 24                               \\
        \bottomrule
    \end{tabular}
\end{table}

Table \ref{tab:solver_iterations_dld} shows that the geometric asymmetry and tortuous flow paths of the DLD device exacerbate the algebraic failure of the standard decoupled formulation. Compared to the square array, the standard Stokes iterations degrade more aggressively, from 33 to 687 iterations as pillar density increases. The Augmented Lagrangian method, governed by $\gamma = m^2$, remains bounded under this added geometric complexity, stabilizing at 24 iterations for the 1.76M-DoF system. This experiment corroborates our framework: a parameter-free stabilization strategy insensitive to both the obstacle density and the geometric configurations of modern microfluidic simulations.

The AL scaling is insensitive to geometric asymmetry. Iteration counts remain bounded at 24 on a 1.77M-DoF DLD array, while the standard formulation degrades to 687. The $\gamma \propto m^2$ rule addresses the geometric ill-conditioning itself, not a symmetry-specific property of the square lattice.

\section{Conclusions and Future Directions}\label{sec:conclusion}

In this paper, we have rigorously established a sharp pre-asymptotic estimate for the continuous inf-sup constant in densely packed microfluidic arrays. We proved that the constant deteriorates at a sharp rate of $\Theta(m^{-1})$ as the pillar density $m$ increases, exposing the fundamental geometric penalty that dictates the stability of the multiscale Stokes system.

Our numerical investigations map this stability degradation to its two concrete manifestations. On the discretization side, the sharp bound enters Brezzi's a priori estimate through the factor $\beta_h^{-1} = \mathcal{O}(m)$, and we observe an asymmetric pollution under macroscopic boundary conditions: Neumann pressure-drop inflows retain first-order pressure convergence, while Dirichlet inflow profiles drive the relative pressure error to a complete $\mathcal{O}(1)$ stagnation that buffer zones do not cure. On the algebraic side, the Schur complement condition scales as $\mathcal{O}(m^2)$, dictating the stagnation of standard iterative methods. Leveraging the sharp bound, we proposed an adaptively scaled Augmented Lagrangian (AL) method with $\gamma \propto m^2$. Experiments on square and asymmetric DLD arrays show that this parameter-free scaling neutralizes the geometric ill-conditioning and achieves density-independent convergence for systems with millions of degrees of freedom.

Future research will focus on developing geometry-aware multigrid preconditioners for the inner AL velocity blocks and extending this framework to multi-scale Fluid-Structure Interaction (FSI) problems. These advancements will enable predictive, fully resolved simulations of complex cell deformations and trajectories within intricate micro-channel topologies.

\appendix

\bibliographystyle{elsarticle-num}
\bibliography{reference}

\end{document}